\definecolor{darkred}{rgb}{0.5,0,0}
\definecolor{darkgreen}{rgb}{0,0.5,0}
\definecolor{darkblue}{rgb}{0,0,0.5}
\newtheorem{theorem}{Theorem}[section]
\newtheorem{corollary}[theorem]{Corollary}
\newtheorem{conjecture}[theorem]{Conjecture}
\newtheorem{proposition}[theorem]{Proposition}
\newtheorem{lemma}[theorem]{Lemma}
\newtheorem{lem}[theorem]{}
\theoremstyle{definition}
\newtheorem{definition}[theorem]{Definition}
\theoremstyle{remark}
\newtheorem{remark}[theorem]{Remark}
\newtheorem{example}[theorem]{Example}
\newcommand{\blem}{\begin{lem} \rm}
\newcommand{\elem}{\end{lem}}
\newcommand\M{\mathcal{M}}
\newcommand\D{\mathcal{D}}
\newcommand{\R}{\mathbb{R}}
\newcommand{\C}{\mathbb{C}}
\newcommand{\Z}{\mathbb{Z}}
\newcommand{\Q}{\mathbb{Q}}
\renewcommand{\P}{\mathbb{P}}
\newcommand{\PP}{\mathcal{P}}
\newcommand{\Par}{\on{Par}}
\newcommand\lie[1]{\mathfrak{#1}}
\newcommand{\g}{\lie{g}}
\renewcommand{\t}{\lie{t}}
\newcommand{\on}{\operatorname}
\newcommand{\ainfty}{{$A_\infty$\ }}
\newcommand{\dual}{\vee}
\newcommand{\Ve}{\on{V}}
\newcommand{\Edge}{\on{E}}
\newcommand{\Ver}{\on{Vert}}
\newcommand{\Hom}{ \on{Hom}}
\renewcommand{\ker}{ \on{ker}}
\newcommand{\coker}{ \on{coker}}
\newcommand{\Vol}{\on{Vol}}
\newcommand\dirac{/\kern-1.2ex\partial} 
\newcommand\qu{/\kern-.7ex/} 
\newcommand\lqu{\backslash \kern-.7ex \backslash} 
\newcommand\dr{r_+ \kern-.7ex - \kern-.7ex r_-}
\newcommand{\Ga}{{\Gamma}}
\newcommand{\bA}{\mathbb A}
\newcommand{\ccomment}[1]   {{}}
\newcommand{\labell}\label
\renewcommand{\d}{{\mbox{d}}}
\newcommand{\ol}{\overline}
\newcommand\eps{\epsilon}
\newcommand{\f}{\frac}
\newcommand{\lan}{\langle}
\newcommand{\ran}{\rangle}
\newcommand{\hh}{{\f{1}{2}}}
\newcommand{\ti}{\tilde}
\newcommand\mE{\mathcal{E}}
\newcommand\Map{\on{Map}}
\newcommand\ev{\on{ev}}
\newcommand\Eul{\on{Eul}}
\newcommand\ul{\underline}
\newcommand\Ker{\on{Ker}}
\newcommand\bra[1]{ < \kern-.7ex {#1} \kern-.7ex >} 
\newcommand\bdefn{\begin{definition}}
\newcommand\edefn{\end{definition}}
\newcommand\bea{\begin{eqnarray*}}
\newcommand\eea{\end{eqnarray*}}
\newcommand\bcv{\left[ \begin{array}{r} }
\newcommand\ecv{\end{array} \right] }
\newcommand\bma{\left[ \begin{array} }
\newcommand\ema{\end{array} \right]}
\newcommand\ben{\begin{enumerate}}
\newcommand\een{\end{enumerate}}
\newcommand\beq{\begin{equation}}
\newcommand\eeq{\end{equation}}
\newcommand\bex{\begin{example}}
\newcommand\bsj{\left\{ \begin{array}{rrr} }
\newcommand\esj{\end{array} \right\}}
\newcommand\eex{\end{example}}
\renewcommand\M{{\mathcal{M}}}
\newcommand\fr{{\on{fr}}}
\newcommand\nodes{g}
\newcommand\sx{*\kern-.5ex_X}
\def\mathunderaccent#1{\let\theaccent#1\mathpalette\putaccentunder}
\def\putaccentunder#1#2{\oalign{$#1#2$\crcr\hidewidth \vbox
to.2ex{\hbox{$#1\theaccent{}$}\vss}\hidewidth}}
\begin{document}

\title[Morphisms of CohFT algebras and quantum Kirwan]{Morphisms of CohFT
  algebras and \\ quantization of the Kirwan map}

\author{K. L. Nguyen} 

\address{Department of Mathematics,
Stanford University, 
building 380, Stanford, California 94305
{\em E-mail address: khoan@math.stanford.edu}}

\author{C. Woodward}

\address{Department of Mathematics, 
Rutgers University,
110 Frelinghuysen Rd,
Piscataway, NJ 08854.
{\em E-mail address: ctw@math.rutgers.edu}}

\author{F. Ziltener}

\address{KIAS,
Mathematics, room 1430,
Hoegiro 87 (207-43 Cheongryangni-dong),
Dongdaemun-gu,
Seoul 130-722,
Korea.
{\em E-mail address: 
 fabian@kias.re.kr}}

\thanks{K.N. was supported by Undergraduate Research Experience portion
of NSF grant DMS060509.  C.W. was partially supported by NSF
  grants DMS060509, DMS0904358, and the Simons Foundation.}

\begin{abstract}  
We introduce a notion of morphism of CohFT algebras, based on the
analogy with $A_\infty$ morphisms.  We outline the construction of a
``quantization'' of the classical Kirwan morphism to a morphism of
CohFT algebras from the equivariant quantum cohomology of a
$G$-variety to the quantum cohomology of its geometric invariant
theory or symplectic quotient, and an example relating to the orbifold
quantum cohomology of a compact toric orbifold.  Finally we identify
the space of Cartier divisors in the moduli space of scaled marked
curves; these appear in the splitting axiom.
\end{abstract}

\maketitle

 \tableofcontents

\section{Introduction} 

In order to formalize the algebraic structure of Gromov-Witten theory
Kontsevich and Manin introduced a notion of {\em cohomological field
theory} (CohFT), see \cite[Section IV]{man:fro}.  The correlators of
such a theory depend on the choice of cohomological classes on the
moduli space of stable marked curves and satisfy a splitting axiom for
each boundary divisor.  In genus zero the moduli space of stable
marked curves may be viewed as the complexification of Stasheff's {\em
associahedron} from \cite{st:ho}, and the notion of CohFT may be
related to the notion of $A_\infty$-algebra: dualizing one of the
factors gives rise to a collection of multilinear maps that we call a
{\em CohFT algebra}.  The full CohFT is related to the CohFT algebra
in the same way that a Frobenius algebra is related to the underlying
algebra.  Recall that Dubrovin \cite{dub:geom} constructed from any
CohFT a {\em Frobenius manifold}, which is a manifold with a family of
multiplications on its tangent spaces together with some additional
data.

In this paper we introduce a notion of {\em morphism of CohFT
  algebras} which is a ``closed string'' analog of a morphism of
\ainfty-algebras.  The additional data in the structure maps is the
choice of cohomological classes on {\em moduli space of stable scaled marked lines}
introduced in Ziltener \cite{zilt:phd}.  This space was studied in
Ma'u-Woodward \cite{mau:mult} and identified with the complexification
of Stasheff's multiplihedron which appears in the definition of
$A_\infty$ map \cite{st:ho}.  The splitting axiom for a morphism of
CohFT algebras gives a relation on the structure maps for each divisor
relation.  Any morphism of CohFT algebras has the property that the
linearization at any point is an algebra morphism in the usual sense.
This fits in well with the language of Hertling-Manin \cite{her:weak}
of {\em $F$-manifolds}.

The definition of morphism of CohFT algebra is motivated by an attempt
to extend the mirror theorems of Givental and others beyond the case
of semipositive toric quotients in Givental \cite{giv:tmp}, as has
also been discussed by many authors, for example, Iritani
\cite{iri:gmt}.  In the second part of the paper we describe a {\em
  quantum Kirwan} morphism of CohFT algebras from the equivariant
quantum cohomology $QH_G(X)$ of a smooth polarized projective
$G$-variety $X$ to the (possibly orbifold) quantum cohomology $QH(X
\qu G)$ of the symplectic/git quotient $X \qu G$.  The existence of
this morphism depends on results of the last two authors and
Venugopalan on existence of virtual fundamental classes, see
\cite{qkirwan}.  Morphisms of CohFT algebras provide an ``algebraic
home'' for the counts of ``vortex bubbles'' that first appeared in the
study by Gaio-Salamon \cite{ga:gw} of the relationship between gauged
Gromov-Witten invariants of a $G$-variety and the Gromov-Witten
invariants of the quotient $X \qu G$ \cite{qkirwan}.  Applying the
quantum Kirwan morphism to the special case of quotients of vector
spaces by tori, one obtains a Batyrev-style presentation of the
(possibly orbifold) quantum cohomology of a toric Deligne-Mumford stack at a
special point; this reproduces partial results by
Coates-Corti-Lee-Tseng \cite{coates:wps}.  We discuss several
conjectures ({\em quantum Kirwan surjectivity} and {\em quantum
  reduction in stages}) which arise naturally in this context.  In the
last part of the paper, we describe which combinations of boundary
divisors in the moduli space of stable scaled lines are Cartier, that
is, have dual cohomology classes.

We thank Ezra Getzler, Sikimeti Ma'u, Joseph Shao, and Constantin
Teleman for helpful discussion and comments.

\section{Morphisms of CohFT algebras}  

In this section we describe the definition of morphisms of CohFT
algebras.  Let $\ol{M}_n$ denote the Grothendieck-Knudsen moduli space
of isomorphism classes of genus zero $n$-marked stable curves
\cite{kn:proj2}, which is a smooth projective variety of dimension $
\dim(\ol{M}_n) = n - 3 .$ 

\begin{remark} {\rm (Boundary divisors for the Grothendieck-Knudsen space)}  
The boundary of $\ol{M}_{n}$ consists of the following divisors: for
each splitting $ \{ 1 ,\ldots , n \} = I_1 \cup I_2$ with $|I_1|,|I_2|
\ge 2$ a divisor
$$\iota_{I_1 \cup I_2}: D_{I_1 \cup I_2} \to \ol{M}_{n}$$ 
corresponding to the formation of a separating node, splitting the
curve
into irreducible components
with markings $I_1,I_2$.  The divisor $D_{I_1 \cup I_2}$ is isomorphic
to $\ol{M}_{|I_1|+1} \times \ol{M}_{|I_2|+1}$.  Let
$$\delta_{I_1 \cup I_2} \in H^2(\ol{M}_n)$$ 
denote its dual cohomology class.  For any $\beta \in H(\ol{M}_n)$,
let
\begin{equation} \label{kunneth} 
 \iota_{I_1 \cup I_2}^* \beta = \sum_j \beta_{1,j} \otimes
 \beta_{2,j} \end{equation}
denote the K\"unneth decomposition of its restriction to $D_{I_1 \cup
  I_2}$.
\end{remark} 

\begin{definition}  {\rm (CohFT algebras)}  
An {\em (even, genus zero) cohomological field theory algebra} over a
$\Q$-ring $\Lambda$ is a datum $(V,(\mu^n)_{n \ge 2})$ where $V$ is a
$\Lambda$-module and $(\mu^n)_{n \ge 2}$ is a collection of
multilinear {\em composition maps}
$$ \mu^{n}: V^{n} \times H(\ol{M}_{n+1},\Lambda) \to V $$
such that each $\mu^n$ is invariant under the natural
\ccomment{changed wording slightly} action of the symmetric group
$S_n$ and the maps $(\mu^n)_{n \ge 2}$ satisfy a {\em splitting
  axiom}: for each partition $I_1 \cup I_2 = \{ 1,\ldots, n \}$,
$$ \mu^n(v_1,\ldots,v_n;\beta \wedge \delta_{I_1 \cup I_2})
= \sum_j \mu^{|I_2| + 1} ( \mu^{|I_1|}(v_i, i \in I_1; \beta_{1,j} ),
v_i, i \in I_2; \beta_{2,j} ) $$
where $\beta_{1,j}, \beta_{2,j}$ are as in \eqref{kunneth}.
\end{definition}  

\begin{remark}   It would be more natural to use tensor products
in the above formula but the use of symbols $\otimes$ instead of
commas $,$ makes the formulas substantially longer.
\end{remark} 

\begin{remark} {\rm (Filtered CohFT algebras)}  In our applications, $\Lambda$
will be a {\em filtered $\Q$-ring} by which we mean a union of
decreasing rings $\Lambda_a, a \in \R$:
$$ \Lambda= \cup_{a \in \R} \Lambda_a, \quad \Lambda_a \supset
\Lambda_{b} \ \forall a < b, \quad \cap_{a \in \R} \Lambda_a = \{ 0
\} .$$
A filtered CohFT algebra is a CohFT algebra $V$ with a filtration
$(V_a)_{a \in \R}$ compatible with the $\Lambda$-module structure,
that is, such that $\Lambda_{a} V_b \subset V_{a + b}, \forall a,b \in
\R $, and such that each structure map $\mu^n$ maps $V_a^n \times
H(\ol{M}_{0,n})$ to $V_a$, for all $a \in \R$.
\end{remark}

\begin{remark} 
\begin{enumerate} 
\item {\rm (Comparison with \ainfty algebras)} The collection of
  composition maps $(\mu^n)_{n \ge 2}$ (which are termed in Manin
  \cite{man:fro} $\on{Comm}_\infty$-structures) may be viewed as
  ``complex analogs'' of the \ainfty-structure maps of Stasheff, in
  the sense that the relevant moduli spaces have been
  ``complexified''.
\item {\rm (Relations via divisor equivalences)} The various relations
  on the divisors in $\ol{M}_{n}$ give rise to relations on the maps
  $\mu^{n}$. In particular the relation
$ [D_{ \{0, 3 \} \cup \{ 1,2 \}}] = [D_{ \{ 0,1 \} \cup \{ 2, 3\} }] $
in $H^2(\ol{M}_{4}) $ implies that
$\mu^2: V \times V \to V$
is associative. 
\end{enumerate}
\end{remark} 

The notion of {\em morphism} of CohFT algebras is based on the
geometry of the {\em complexified multiplihedron} $\ol{M}_{n,1}(\bA)$
introduced in Ziltener \cite{zilt:phd} and studied further in
Ma'u-Woodward \cite{mau:mult}.

\begin{definition} 
\label{smoothscalings}
{\rm (Scalings on smooth curves)} 
\begin{enumerate} 
\item A {\em non-degenerate scaling} on a smooth genus zero complex
  projective curve $C$ with {\em root marking} $z_0$ is a meromorphic
  one-form $\lambda: C \to T^\dual C$ with the property that $\lambda$
  has a single pole of order two at a point $z \in C$, so that
  $\lambda$ equips $C - \{ z_0 \}$ with the structure of an affine
  line.  Denote by $\Sigma(C,z_0)$ the space of scalings on $C$ with
  pole at $z_0$, and by $\ol{\Sigma}(C,z_0)$ the compactification
  $\ol{\Sigma}(C,z_0) = \Sigma(C,z_0) \cup \{ 0, \infty \}$.  
\item A {\em $n$-marked scaled line} is a smooth projective curve of
  genus zero equipped with a non-degenerate scaling $\lambda \in
  \Sigma(C,z_0)$ and a collection $z_1,\ldots,z_n \in C$ of points
  distinct from each other and from the root marking $z_0$.
\end{enumerate}  
\end{definition}

Let $M_{n,1}(\bA)$ denote the moduli space of isomorphism classes of
$n$-marked scaled lines.  We may view $M_{n,1}(\bA)$ as the moduli
space of isomorphism classes of $n$-markings on an affine line $\bA$,
where two sets of markings are isomorphic if they are related by
translation.  ${M}_{n,1}(\bA)$ admits a compactification by allowing
nodal curves with possible degenerate scalings as follows.  

\begin{definition} \label{nodalscalings}
\begin{enumerate}
\item  {\rm (Dualizing sheaf and its projectivization)}  
Recall from e.g. \cite[p.91]{ar:alg2} that if $C$ is a genus zero
nodal curve then the dualizing sheaf $\omega_C$ on $C$ is locally free
of rank one, that is, a line bundle.  Explicitly, if $\ti{C}$ denotes
the normalization of $C$ (the disjoint union of the irreducible
components of $C$) with nodal points $\{ \{w_1^+,w_1^- \}, \ldots, \{
w_k^+, w_k^- \} \}$ then $\omega_C$ is the sheaf of sections of
$\omega_{\ti{C}} := T^\dual \ti{C}$ whose residues at the points $
w_j^+, w_j^-$ sum to zero, for $j = 1,\ldots,k$.  Denote by
$\P(\omega_C \oplus \C)$ the fiber bundle obtained by adding in a
section at infinity.
\item {\rm (Scalings on nodal curves)} Let $C$ be a connected
  projective nodal curve of arithmetic genus zero.  A {\em scaling} on
  $C$ is a section $\lambda : C \to \P(\omega_C \oplus \C)$ such that
  the restriction of $\lambda$ to any irreducible component is a
  (possibly degenerate) scaling as in Definition \ref{smoothscalings}.
\item {\rm (Scaled affine lines)} A {\em nodal $n$-marked scaled line}
  consists of
\begin{enumerate}
\item a connected projective nodal curve $C$ of arithmetic genus zero,
\item a scaling $\lambda : C \to \P(\omega_C \oplus \C)$
\item a collection of markings $z_0,\ldots,z_n \in C$ distinct from
  the nodes
\end{enumerate}
such that the following {\em monotonicity condition} is satisfied:
\begin{enumerate} 
\item[(i)] for each $i = 1,\ldots, n$, there is exactly one
  irreducible component of $C_{+,i}$ of $C$ on the path of irreducible
  components between $z_0$ and $z_i$ on which $\lambda$ is finite and
  non-zero, with double pole at the node which disconnects the
  component from the root marking $z_0$, 
\ccomment{slightly changed}
and
\item[(ii)] the irreducible components other than $C_{+,i}$ on the path of irreducible components
  between $z_i$ and $z_0$ have either $\lambda = 0$ (if they can be
  connected to $z_i$ without passing through $C_{+,i}$) or $\lambda =
  \infty$ (if they are connected to $z_0$ without passing through
  $C_{+,i}$).
\end{enumerate}
A nodal marked scaled affine line is {\em stable} if each irreducible
component with non-degenerate scaling has at least two special points,
and each irreducible component with degenerate scaling has at least
three special points. 
\item {\rm (Combinatorial types of scaled affine lines)} The {\em
  combinatorial type} of a nodal scaled affine line is the {\em rooted
  colored tree} $\Gamma = (\Ve(\Gamma),\Edge(\Gamma))$ whose vertices
  are the irreducible components of $C$, edges are the nodes and
  markings, equipped with a bijection from the set of semi-infinite
  edges $\Edge_\infty(\Gamma)$ to $\{ 0 ,\ldots, n \}$ given by the
  markings, and a subset of {\em colored vertices} $\Ve^+(\Gamma)
  \subset \Ve(\Gamma)$ corresponding to irreducible components with
  non-degenerate scalings.  This ends the definition.
\end{enumerate}  \end{definition} 

\begin{example}  See Figure \ref{zilt} below for an example of a nodal scaled affine line, where irreducible
components with $\lambda = 0$ resp. $\lambda$ finite and non-zero
resp. $\lambda$ is infinite are shown with light resp. medium
resp. dark shading.  The example shown is not stable, because several
of the lightly shaded components and darkly shaded components have
less than three special points.
\end{example}  

\begin{figure}[ht]
\begin{picture}(0,0)%
\includegraphics{zilt.pstex}%
\end{picture}%
\setlength{\unitlength}{4144sp}%
\begingroup\makeatletter\ifx\SetFigFontNFSS\undefined%
\gdef\SetFigFontNFSS#1#2#3#4#5{%
  \reset@font\fontsize{#1}{#2pt}%
  \fontfamily{#3}\fontseries{#4}\fontshape{#5}%
  \selectfont}%
\fi\endgroup%
\begin{picture}(3441,1893)(2013,-3625)
\put(3635,-3434){\makebox(0,0)[lb]{{{{$z_0$}%
}}}}
\put(4840,-2624){\makebox(0,0)[lb]{{{{$z_1$}%
}}}}
\put(4552,-1868){\makebox(0,0)[lb]{{{{$z_2$}%
}}}}
\put(2518,-2571){\makebox(0,0)[lb]{{{{$z_3$}%
}}}}
\put(2699,-1948){\makebox(0,0)[lb]{{{{$z_4$}%
}}}}
\put(2946,-2243){\makebox(0,0)[lb]{{{{$z_5$}%
}}}}
\end{picture}%
\caption{An nodal scaled line}
\label{zilt}
\end{figure} 

\begin{remark}  {\rm (Affine structures on the components with non-degenerate
scalings)} The monotonicity condition implies that the restriction of
  $\lambda$ to any irreducible component $C_{i,+}$ has a unique pole,
  hence a unique double pole at the nodal point $\hat{z}_i$ connecting
  $C_{i,+}$ with the component containing $z_0$, and so defines an
  affine structure on the complement $C_{i,+} - \hat{z}_i$.  The other
  components have no canonical affine structures.  \ccomment{changed
    slightly}
\end{remark} 

Let $\M_{n,1,\Gamma}(\bA)$ resp. $\ol{M}_{n,1}(\bA)$ denote the moduli
space of isomorphism classes of stable scaled $n$-marked affine lines
of type $\Gamma$ resp. the union over combinatorial types.  We call
$\ol{M}_{n,1}(\bA)$ the {\em complexified multiplihedron}.

\begin{proposition}  \cite{mau:mult} The spaces $\ol{M}_{n,1}(\bA)$ 
 admit the structure of quasiprojective resp. projective varieties of
 dimension
\begin{equation} \label{dimform} 
 \dim(M_{n,1,\Gamma}(\bA)) = n - 2 - |\Edge_{< \infty}(\Gamma)| +
 |\Ve^+(\Gamma)|, \ \ \ \dim(\ol{M}_{n,1}(\bA)) = n - 1
 .\end{equation}
\end{proposition} 

The space $\ol{M}_{n,1}(\bA)$ was first studied in Ziltener's thesis
\cite{zilt:phd} in the context of gauged Gromov-Witten theory on the
affine line.

\begin{example}  {\rm (The second complexified multiplihedron)}  
The moduli space $\ol{M}_{n,1}(\bA)$ in the first non-trivial case $n
= 2$ admits an isomorphism
\begin{equation} \label{diff} \ol{M}_{2,1}(\bA) \to \P, \ \ \ [z_1,z_2 ] \mapsto z_1 - z_2 
\end{equation}
(here $\P$ is the projective line) with two distinguished points given
by nodal scaled affine lines, appearing in the limit where the two
markings become infinitely close or far apart, see Figures
\ref{twoconverge}, \ref{twodiverge}.  Here $[z_1,z_2] \in
M_{2,1}(\bA)$ is a point in the open stratum, given by markings at
$z_1,z_2$ modulo translation only on $\bA$.
\end{example} 

\begin{figure}[ht]
\begin{picture}(0,0)%
\includegraphics{twoconverge.pstex}%
\end{picture}%
\setlength{\unitlength}{4144sp}%
\begingroup\makeatletter\ifx\SetFigFont\undefined%
\gdef\SetFigFont#1#2#3#4#5{%
  \reset@font\fontsize{#1}{#2pt}%
  \fontfamily{#3}\fontseries{#4}\fontshape{#5}%
  \selectfont}%
\fi\endgroup%
\begin{picture}(2547,1399)(3470,-3678)
\put(5643,-3271){\makebox(0,0)[lb]{{{{$z_0$}%
}}}}
\put(3814,-3259){\makebox(0,0)[lb]{{{{$z_0$}%
}}}}
\put(3595,-2886){\makebox(0,0)[lb]{{{{$z_1$}%
}}}}
\put(3969,-2894){\makebox(0,0)[lb]{{{{$z_2$}%
}}}}
\put(5419,-2566){\makebox(0,0)[lb]{{{{$z_1$}%
}}}}
\put(5665,-2574){\makebox(0,0)[lb]{{{{$z_2$}%
}}}}
\end{picture}%
\caption{Two markings converging} 
\label{twoconverge}
\end{figure}

\begin{figure}[ht]
\begin{picture}(0,0)%
\includegraphics{twodiverge.pstex}%
\end{picture}%
\setlength{\unitlength}{4144sp}%
\begingroup\makeatletter\ifx\SetFigFont\undefined%
\gdef\SetFigFont#1#2#3#4#5{%
  \reset@font\fontsize{#1}{#2pt}%
  \fontfamily{#3}\fontseries{#4}\fontshape{#5}%
  \selectfont}%
\fi\endgroup%
\begin{picture}(3105,1120)(3468,-3708)
\put(3787,-3290){\makebox(0,0)[lb]{{{{$z_0$}%
}}}}
\put(3585,-2946){\makebox(0,0)[lb]{{{{$z_1$}%
}}}}
\put(3930,-2953){\makebox(0,0)[lb]{{{{$z_2$}%
}}}}
\put(5861,-3332){\makebox(0,0)[lb]{{{{$z_0$}%
}}}}
\put(5372,-2851){\makebox(0,0)[lb]{{{{$z_1$}%
}}}}
\put(6261,-2826){\makebox(0,0)[lb]{{{{$z_2$}%
}}}}
\end{picture}%
\caption{Two markings diverging} 
\label{twodiverge}
\end{figure}

\begin{remark} {\rm (Embedding via forgetful morphisms)}  
More generally, for arbitrary $n$ there exists for any choice of
$\{i,j \} \subset \{1,\ldots, n \}$ of subset of order $2$ a {\em
  forgetful morphism}
$$ f_{i,j} : \ol{M}_{n,1}(\bA) \to \ol{M}_{2,1}(\bA) $$
forgetting the markings other than $i,j$ and collapsing all unstable
irreducible components, and for any choice $\{ i,j,k,l \} \subset \{ 0,\ldots, n \}$
of subset of order $4$ a forgetful morphism 
$$ f_{i,j,k,l} : \ol{M}_{n,1}(\bA) \to \ol{M}_4 $$ 
given by forgetting the scaling and all markings except $i,j,k,l$, and
collapsing all unstable irreducible components.  The product of
forgetful morphisms defines an embedding into a product of projective
lines.
\end{remark}  

The variety $\ol{M}_{n,1}(\bA)$ is not smooth, but rather has toric
singularities, see Section \ref{local}.  The {\em boundary divisors}
are the closures of strata $M_{n,1,\Gamma}$ of codimension one. 

\begin{remark} {\rm (Description of the boundary divisors of the complexified multiplihedron)} 
 From the dimension formula \eqref{dimform} one sees that there are
 two types of boundary divisors.  First, for any $I \subset
 \{1,\ldots, n \}$ with $|I| \ge 2$ we have a divisor
$$\iota_I: D_I \to \ol{M}_{n,1}(\bA)$$
corresponding to the formation of a single bubble containing the
markings $I$.  This divisor admits a gluing isomorphism
\begin{equation} \label{homeo1} 
 D_I \to \ol{M}_{|I| + 1} \times \ol{M}_{n - |I|+1,1}(\bA) .\end{equation}
Call these divisors of {\em type} I.  Second, for any unordered partition $I_1
\cup \ldots \cup I_r $ of $\{1 ,\ldots, n\}$ with $r \ge 2$ we have a
divisor $D_{I_1,\ldots,I_r}$ corresponding to the formation
of $r$ bubbles with markings $I_1,\ldots, I_r$, attached to a
remaining irreducible component with infinite scaling.  This divisor admits a
gluing isomorphism
\begin{equation} \label{homeo2}
 D_{I_1,\ldots,I_r
 } \cong \left( \prod_{i=1}^r \ol{M}_{|I_i|,1}(\bA) \right) \times \ol{M}_{r+1}
 .\end{equation}
Call these {\em divisors of type} II.  Note that the divisors of type
I and type II roughly correspond to the terms in the definition of
$A_\infty$ functor.
\end{remark} 

Recall that a {\em Weil divisor} on a normal scheme $X$ is a formal,
locally finite sum of codimension one subvarieties, while a {\em
  Cartier divisor} is a Weil divisor given as the zero set of a
meromorphic section of a line bundle with multiplicities given by the
order of vanishing of the section \cite[Remark 6.11.2]{ha:al}.
\ccomment{made more precise} For smooth varieties, any Weil divisor is
Cartier.  Since $\ol{M}_{n,1}(\bA)$ is not smooth, Weil divisors are
not necessarily Cartier, in particular, a Weil divisor may not admit a
dual cohomology class of degree $2$.  That is, for a Weil divisor
\begin{equation} \label{coeff}
 D = \sum_I n_I [D_I] + \sum_{I_1 \sqcup \ldots \sqcup I_r = \{ 1,\ldots
,n \} } n_{I_1, \ldots, I_r} [D_{I_1 , \ldots , I_r }]
\end{equation}
there may or may not exist a class $\delta \in
H^2(\ol{M}_{n,1}(\bA))$ that satisfies
$$ \langle \beta, [D] \rangle = \langle \beta \wedge \delta ,
[\ol{M}_{n,1}(\bA)] \rangle .$$

Let $(V,(\mu_V^n)_{n \ge 2}) $ and $(W,(\mu_W^n)_{n \ge 2}) $ be CohFT
algebras over a $\Q$-ring $\Lambda$.  

\begin{definition} {\rm (Morphisms of CohFT algebras)}   A {\em morphism of
CohFT algebras} from $V$ to $W$ is a collection of $S_n$-invariant,
  multilinear maps \ccomment{added $\Lambda$}
$$ \phi^n: V^{n} \times H(\ol{M}_{n,1}(\bA)) \to W, \quad n \ge 0 $$
such that for any Cartier divisor $D$ of the form \eqref{coeff} with
dual class $\delta \in H^2(\ol{M}_{n,1}(\bA))$, any $v \in V^n$
and any $\beta \in H(\ol{M}_{n,1}(\bA))$
%
%
\begin{multline} \label{weaksplit}
\phi^n(v, \beta \wedge \delta) = 
\sum_{I} n_I \phi^{n - |I| + 1} ( \mu_V^{|I|}(v_i, i \in I; \cdot )
,v_j, j \notin I; \cdot)(\iota_I^* \beta) \\ 
+  \sum_{r \leq s, I_1,\ldots,I_r} n_{I_1,\ldots,I_r} \mu_W^s( \phi^{|I_1|}( v_i, i \in I_1; \cdot), \ldots,
\\ \phi^{|I_r|}( v_i, i \in I_r; \cdot),\phi^0(1),\ldots,
\phi^0(1); \cdot ) (\iota_{I_1,\ldots,I_r}^* \beta)/(s-r)!
\end{multline}
where $\cdot$ indicates insertion of the Kunneth components of
$\iota_I^* \beta$, $\iota_{I_1,\ldots, I_r}^* \beta$, using the
homeomorphisms \eqref{homeo1}, \eqref{homeo2} and the sum on the
right-hand side is, by assumption, finite.  The element $\phi^0(1) \in
W$ is the {\em curvature} of the morphism $(\phi^n)_{n \ge 0}$, and
$(\phi^n)_{n \ge 0}$ is {\em flat} if the curvature vanishes.  A {\em
  morphism of filtered CohFT algebras $V,W$} is a collection of maps
$\phi^n$ as above such that each $\phi^n$ preserves the filtrations in
the sense that $\phi^n$ maps $V_a^n \times H(\ol{M}_{n,1}(\bA))$ to
$W_a$ and \eqref{weaksplit} is finite modulo $W_a$ for any $a \in \R$.
\end{definition}

\begin{example}  $\ol{M}_{2,1}(\bA) \cong \P$ and so every Weil divisor
is Cartier and any two prime Weil divisors are linearly equivalent.
In particular, the equivalence $ [D_{ \{ 1,2 \} }] = [D_{ \{ 1 \}, \{
    2 \}}] $ holds in $H^2(\ol{M}_{2,1}(\bA),\Q) = \Q$.  Hence if
$(\phi^n)_{n \ge 0}: (V,(\mu_V^n)_{n \ge 2}) \to (W, (\mu_W^n)_{n\ge
  2}) $ is a flat morphism of CohFT algebras then $\phi^1:V \to W$ is
a homomorphism, $ \phi^1 \circ \mu^2_V = \mu^2_W \circ (\phi^1 \times
\phi^1) .$
\end{example} 

Recall that the notion of CohFT may be reformulated as a Frobenius
manifold structure of Dubrovin \cite{dub:geom}, consisting of a datum
$(V,g,F,1,e)$ of an affine manifold $V$, a metric $g$ on the tangent
spaces, a potential $F$ whose third derivatives provide the tangent
spaces $T_v V$ with associative multiplications $\star_v: T_v^2V \to
T_v V$, a unity vector field $1$ and an Euler vector field $e$
providing a grading.  Any CohFT $(V, (\mu^n)_{n \ge 2})$ defines a
formal Frobenius manifold \cite{man:fro} with formally associative
products
\begin{equation} \label{starv}
 \star_v: T_v^2V \to T_v V ,\quad (w_1,w_2) \mapsto \sum_{n \ge 0}
 \mu^{n+2}(w_1,w_2,v,\ldots,v)/n! .\end{equation}
Formal associativity means that the Taylor coefficients in the
expansion of
$$ (w_1 \star_v w_2) \star_v w_3 - w_1 \star_v (w_2 \star_v w_3) $$
around $v = 0$ vanish to all orders for any $w_1,w_2,w_3 \in T_v V$;
in good cases one has convergence of the corresponding infinite sums.
Later, a weaker notion of {\em $F$-manifold} was introduced by Manin
and Hertling \cite{her:weak}, which consists of a pair $(V,\circ)$
where $\circ$ is a family of multiplications on the tangent spaces
$T_vV$ satisfying a certain axiom. In other words, one forgets the
data $g,1,e$.  This weaker notion is compatible with the notion of
morphism of CohFT algebras:

\begin{proposition}  {\rm (Algebra homomorphisms on tangent spaces)}  
Any morphism of CohFT algebras $( \phi^n )_{n \ge 0}$ from $V$ to $W$
defines a formal map
$$ \phi: V \to W, \quad
v \mapsto \sum_{n \ge 0} \frac{1}{n!} \phi^n(v,\ldots, v;1) $$
with the property that for any $v \in V$ \ccomment{added any v}
the linearization
$ D_v \phi: T_v V \to T_{\phi(v)} W $
is a $\star$-homomorphism in the sense that
\begin{equation} \label{starhom} D_v \phi(w_1) \star_{\phi(v)} D_v \phi(w_2) = D_v \phi( w_1 \star_v w_2) ,
\ \forall w_1,w_2 \in T_v V. \end{equation}
\end{proposition}  
 
\noindent By a formal map we mean a map from a formal neighborhood of
$0$ in $V$ to a formal neighborhood of $\phi(0)$ in $W$.  The equation
\eqref{starhom} holds in the sense of Taylor expansion around $v = 0$
to all order.

\begin{proof}  Consider 
the divisor relation $D_{ \{ 1,2 \} } \sim D_{ \{ 1 \}, \{ 2 \}}$ on
$\ol{M}_{2,1}(\bA)$.  Its pull-back to $\ol{M}_{n,1}(\bA)$ is the
relation
\begin{equation} \label{bigrelation} 
 \sum_{I_1 \ni 1,I_2 \ni 2,I_3,\ldots,I_r} D_{I_1,I_2,\ldots, I_r}
 \sim \sum_{I \supset \{ 1, 2 \}} D_I \end{equation}
where the first sum is over unordered partitions $I_1,\ldots, I_r$
with $1 \in I_1,2 \in I_2$ and each $I_j, j = 1,\ldots, r$ non-empty,
and the second is over subsets $I \subset \{1,\ldots, n \}$ with $\{
1,2 \} \subset I$.  Indeed, the map \eqref{diff} composes with the
forgetful map to give a rational function
$$ f_{2,1}: \ol{M}_{n,1}(\bA) \to \ol{M}_{2,1}(\bA) \cong \P .$$
For $n = 2$, this map identifies $D_{ \{ 1,2 \}} \to \{ 0 \}, \ D_{ \{
  1 \}, \{ 2 \}} \to \{ \infty \} .$ For arbitrary $n$, one checks
using the charts in Ma'u-Woodward \cite{mau:mult} that the order of
vanishing of $f_{2,1}$ on $D_I$ resp. $D_{I_1,\ldots,I_r}$ is $1$
resp. $-1$ iff $ \{ 1, 2 \} \subset I$ resp. $I_1,I_2$ separate $1,2$
and $0$ otherwise.  Since the partitions are unordered, if $I_1,I_2$
separate $1,2$ we may assume that $1 \in I_1$ resp.  $2 \in I_2$.
Note that the number of ways of choosing the partition on the
left-hand-side of \eqref{bigrelation} with sizes $i_1,\ldots,i_r$ is $
\left( \begin{array}{c} n- 2 \\ i_1 - 1 \ i_2 - 1 \ i_3 \ldots
  i_r \end{array} \right) $.  We compute
\begin{eqnarray*}
D_v \phi(w_1 \star_v w_2) 
&=&\sum_{n,i}\frac{1}{(i-2)!(n-i)!}\phi^{n-i+1}\big(\mu^i_V(w_1,w_2,v,\ldots,v;1),v,\ldots,v;1\big)\\
&=&\sum_{n,I} ((n-2)!)^{-1} \phi^{n-|I|+1}\big(\mu^{|I|}_V(w_1,w_2,v,\ldots,v;1),v,\ldots,v;1\big)\\
&=& \sum_{I_1 \ni 1, I_2 \ni 2, I_3,\ldots,I_r} 
((n-2)! \# \{j \, | \, I_j = \emptyset \} !)^{-1} 
\mu^r_W \big(
\phi^{|I_1|}(w_1,v,\ldots,v;1),\\
&&\phi^{|I_2|}(w_2,v,\ldots,v;1),\phi^{|I_3|}(v,\ldots,v;1),\ldots,\phi^{|I_r|}(v,\ldots,v;1);1\big) \\
&=& \sum_{i_1,i_2 \ge 1,i_3 \ldots,i_r\geq 0}\frac{1}{(i_1-1)!(i_2-1)!i_3!\cdots i_r!(r-2)!}\mu_W^r\big(\phi^{i_1}(w_1,v,\ldots,v;1),\\
&&\phi^{i_2}(w_2,v,\ldots,v;1),\phi^{i_3}(v,\ldots,v;1),\ldots,\phi^{i_r}(v,\ldots,v;1);1\big) \\
&=&\sum_{r}
\frac{1}{(r-2)!}\mu_W^r\big(D_v \phi(w_1),D_v \phi(w_2),\phi(v),\ldots, \phi(v)) \\ 
&=& D_v \phi(w_1) \star_{\phi(v)} D_v \phi(w_2) 
\end{eqnarray*}
where the right-hand-side is assumed to be a finite sum (modulo any
$W_a$, for a morphism of filtered CohFT algebras).  Here the first
equality is by definition of $\phi,\star_v$ and the second replaces
the sum over $i$ with the sum over subsets $I$ containing $1,2$. The
third follows from the splitting axiom \eqref{weaksplit}, where the
elements of the partition $I_1,\ldots,I_r$ may be empty.  The fourth
equality replaces the sum over unordered partitions $I_1,\ldots,I_r$
with $I_1 \ni 1, I_2 \ni 2$ with the sum over their sizes $i_1,\ldots,
i_r$, with the additional factorial $(r-2)!$ arising from the possible
orderings of the subsets $I_3,\ldots,I_r$.  The fifth equality follows
by definition of $\phi, \mu_W$, and the last equality follows by
definition of $\star_{\phi(v)}$.
\end{proof}  

\begin{remark} 
It would be interesting to characterize which $\star$-morphisms arise
from morphisms of CohFT algebras.  This would require a study of the
cohomology ring of $\ol{M}_{n,1}(\bA)$ along the lines of Keel
\cite{keel:int} for the moduli space of stable marked genus zero
curves; this paper is essentially a partial study of the second
cohomology group only.  The most naive possibility would be an analog
of Keel's result \cite{keel:int}, namely that $H(\ol{M}_{n,1}(\bA))$
is generated by the classes of the Cartier boundary divisors modulo
the relations given by the preimages of $D_{\{1 \} \{ 2 \}} - D_{ \{
  1,2 \}}$ under the forgetful morphism $f_{ij}: \ol{M}_{n,1}(\bA))
\to \ol{M}_{2,1}(\bA)$, as $i,j$ range over distinct elements of
$\{1,\ldots, n \}$, and the products $D' D''$, if $D'$ and $D''$ are
disjoint Cartier divisors.
\end{remark} 

\section{Quantum Kirwan morphism} 

In this section we describe the motivating example for the theory of
morphisms of CohFT algebras in the previous section, the quantum
Kirwan morphism.  Let $G$ be a compact Lie group, $G_\C$ its
complexification, and $X$ be a smooth projective $G_\C$-variety
equipped with a polarization (ample $G$-line bundle) \ccomment{added
  defn of polarization} such that $G$ acts locally freely on the
semistable locus.  The classical Kirwan morphism $ H_G(X) \to H(X \qu
G) $ is surjective by Kirwan's thesis \cite{ki:coh}.  Computing the
kernel of the Kirwan morphism therefore gives a presentation of the
cohomology ring of the quotient $X \qu G$.  Let $QH_G(X)$ resp. $QH(X
\qu G)$ denote the corresponding quantum cohomologies defined over the
universal Novikov field.  Each has the structure of a CohFT algebra,
with products given by suitable counts of genus zero stable maps.  The
quantum version of the Kirwan morphism is a morphism of CohFT algebras
$$Q \kappa : QH_G(X) \to QH(X \qu G).$$
The virtual fundamental cycles are constructed algebraically in
\cite{qkirwan}.  We describe first the symplectic approach.

\subsection{Affine vortices} 

From the symplectic point of view the quantum Kirwan morphism is
defined by a count of {\em affine vortices} introduced in Ziltener
\cite{zilt:phd}, \cite{zilt:qk2}.  There is also an algebro-geometric
interpretation, as a count of certain morpisms to the quotient stack
$X/G_\C$, that we present later.  Let $\g$ denote the Lie algebra of
$G$, and let $\Phi: X \to \g^\dual$ be a moment map for the action of
$G$ on $X$ arising from a unitary connection on the polarization.  For
any connection $A \in \Omega^1(\bA,\g)$, we denote by $F_A \in
\Omega^2(\bA,\g)$ its curvature.  We assume that $\g$ is equipped with
an invariant metric inducing an identification $\g \to \g^\dual$.

\begin{definition}  {\rm (Affine symplectic vortices)} An {\em $n$-marked affine symplectic vortex} 
to $X$ is a datum $(A,u,\ul{z})$, where $A \in \Omega^1(\bA,\g)$ is a
connection on the trivial bundle, $u: \bA \to X$ is a holomorphic with
respect to the complex structure determined by $A$, $\ul{z} =
(z_1,\ldots, z_n) \in \bA^n$ is a collection of distinct points, and
$$ F_A + u^* \Phi \, \Vol_\bA = 0. $$
Here $\Vol_\bA = \frac{i}{2} \d z \wedge \d \ol{z}$ is the standard
real area form on $\bA$.  

An {\em isomorphism} of marked symplectic vortices $(A_j, u_j,
\ul{z}_j),j = 0,1 $ is an automorphism of the trivial bundle $\phi: \bA
\times G \to \bA \times G$ such that $\phi^* A_1 = A_0$ and $\phi^* u_1 =
u_0$ (thinking of $u_0,u_1$ as sections of the associated $X$-bundle)
such that $\phi$ covers a translation on the base, that is, there
exists a $\tau \in \C$ such that $ \pi \circ \phi( z,g) = z + \tau$
for all $z,g \in \bA \times G$, and $z_{i,1} = z_{i,0} + \tau$ for $i
= 1,\ldots, n$.

The {\em energy} of a vortex $(A,u,\ul{z})$ is given by
\begin{equation} \label{energy} E(A,u) = \hh \int_{\bA}   \left( \Vert \d_A u \Vert^2 + \Vert F_A\Vert^2 + \Vert u^*\Phi \Vert^2 \right) \Vol_{\bA}.\end{equation}
This ends the definition.
\end{definition} 

Let $M_{n,1}^G(\bA,X)$ denote the moduli space of isomorphism classes
of finite energy $n$-marked vortices on $\bA$ with values in $X$.  The
following Hitchin-Kobayashi correspondence gives an algebro-geometric
description of the moduli space of affine vortices.  Its proof is part
of a joint project with Venugopalan to appear elsewhere, see
\cite{venu:heat}.  By definition \cite{dm:irr} a morphism $u$ from the
projective line $\P$ to the quotient stack $X/G_\C$ consists of a
$G_\C$-bundle $P \to \P$ together with a section $\P \to P
\times_{G_\C} X$.  By the git quotient $X \qu G_\C$ we mean the
stack-theoretic quotient of the semi-stable locus by the group action;
if stable=semistable then $X \qu G_\C$ has coarse moduli space the
projective variety considered in Mumford et al
\cite{mu:ge}. 

\begin{theorem} {\rm (Classification of affine vortices)} 
\label{vw}
 Suppose that $X$ is a smooth polarized projective $G_\C$-variety such
 that $G_\C$ acts freely on the semistable locus of $X$.  There is a
 bijection between isomorphism classes of finite energy affine
 vortices and isomorphism classes of morphisms $u$ from the projective
 line $\P$ to the quotient stack $X/G_\C$ such that $u(\infty)$ lies
 in the semistable locus $X \qu G_\C \subset X/G_\C$.
\end{theorem}    

The moduli space $M_{n,1}^G(\bA,X)$ admits a compactification
$\ol{M}_{n,1}^G(\bA,X)$ allowing nodal scaled lines as the domain:

\begin{definition} {\rm (Affine scaled gauged maps)}  An  {\em affine marked nodal scaled gauged map} 
to $X$ is a marked nodal scaled line $(C,\lambda,\ul{z})$ together
with a morphism $u: C \to X/G_\C$ such that
\begin{enumerate}
\item {\rm (Semistable bundle where the scaling is zero)} for each
  irreducible component $C_i$ with zero scaling $\lambda | C_i = 0$,
  the $G$-bundle on $C_i$ is semistable, hence trivializable;
\item {\rm (Semistable point where the scaling is infinite)} for each $z \in
  C$ with $\lambda(z) = \infty$, the image $u(z)$ lies in the
  semistable locus $X \qu G_\C$.
\end{enumerate}
A nodal scaled morphism is {\em stable} if it has no infinitesimal
automorphisms, or equivalently, if each irreducible component on which $u$ is
trivial has at least three special points, or two special points and
non-degenerate scaling.    This ends the definition.
\end{definition} 

\begin{remark} 
\begin{enumerate} 
\item {\rm (Evaluation and forgetful morphisms)} 
Let $\ol{M}_{n,1}^G(\bA,X)$ denote the moduli space of isomorphism
classes of stable nodal scaled maps to $X$.  $\ol{M}_{n,1}^{G}(\bA,X)$
admits an evaluation map at the markings, and if the action of $G$ on
the semistable locus is free, an additional evaluation map at infinity
to $X \qu G$ \cite{zilt:phd}, \cite{zilt:qk2}:
$$ \ev \times \ev_\infty: \ol{M}_{n,1}^G(\bA,X) \to (X/G_\C)^{n} \times X
\qu G_\C .$$
For $n > 0$, there is a forgetful morphism to the moduli space of
scaled lines,
$$f:\ol{M}_{n,1}^G(\bA,X) \to \ol{M}_{n,1}^G(\bA) .$$
\item {\rm (The locally free case)} In the case that $G$ acts only
  {\em locally} freely on the semistable locus in $X$, the quotient $X
  \qu G$ is an orbifold or smooth {\em Deligne-Mumford stack}.  The
  Hitchin-Kobayashi correspondence in this case relates affine
  vortices to representable morphisms of a {\em weighted projective
    line} $\P(1,r) \to X/G_\C$ for some $r > 0$ \ccomment{changed
    $\ge$ to $>$} such that $\infty$ maps to the semistable locus, so
  that the evaluation map at infinity
$$ \ev_\infty: \ol{M}_{n,1}^G(\bA,X) \to \ol{I}_{X \qu G_\C} $$
takes values in the {\em rigidified inertia stack}
$$\ol{I}_{X \qu G_\C} := \cup_{r \ge 1} \Hom^{\on{rep}}( \P(r), X \qu
G_\C)/\P(r) $$
of representable morphisms from $\P(r) = B\Z_r$ to $X \qu G_\C$ modulo
$\P(r)$, for some integer $r \ge 1$.  \ccomment{changed to $r \ge 1$,
  not $r \ge 0$}
See
Abramovich-Olsson-Vistoli \cite{aov:twisted} and
Abramovich-Graber-Vistoli \cite{agv:gw} for more on the definition of
$\ol{I}_{X \qu G}$.
\end{enumerate} 
\end{remark} 

\subsection{Quantum Kirwan morphism} 

The quantum Kirwan map is defined by virtual integration over the
moduli space of affine vortices introduced in the previous subsection.
Existence and axiomatic properties of virtual fundamental classes for
the case of smooth projective varieties as target using the
Behrend-Fantechi \cite{bf:in} machinery are proved in \cite{qkirwan}.
Some results in the direction of establishing the existence of
fundamental classes for target compact Hamiltonian actions were taken
in \cite{zilt:fred}.  Here we review the case of algebraic target.

\begin{definition}  \label{novikov} 
\begin{enumerate}
\item {\rm (Novikov field)} Given a smooth projective $G_\C$-variety
  $X$ and an equivariant symplectic class $[\omega_G] \in H_2^G(X)$
  define the {\em Novikov field} $\Lambda_X^G$ for $X$ as the set of
  all maps $\lambda: H^G_2(X):=H^G_2(X,\Q)\to \Q$ such that for every
  constant $c$, the set of classes
$$ \left\{ d \in H^G_2(X,\Q), \ \lan [\omega], d \ran \leq c
\right\} $$
on which $\lambda$ is non-vanishing is finite.  The delta function at $d$ is
denoted $q^d$.  Addition is defined in the usual way and
multiplication is convolution, so that $q^{d_1} q^{d_2} = q^{d_1 +
  d_2}$. 
\item {\rm (Equivariant quantum cohomology)} Define as vector spaces
$$ QH^G(X,\Q) := H^G(X,\Q) \otimes \Lambda_X^G .$$
Let $QH(X \qu G)$ denote the quantum cohomology defined over the
Novikov field $\Lambda_X^G$, that is,
$$QH(X \qu G) := H(\ol{I}_{X \qu G},\Q) \otimes \Lambda_X^G .$$  
\item {\rm (Quantum Kirwan morphism)} For each $n \ge 0$ define a map
$$ Q\kappa^{n} : QH_G(X)^{n} \times H(\ol{M}_{n,1}(\bA)) \to QH(X \qu G
) $$
as follows.  For $\alpha \in H_G(X)^{n}, \beta \in
H^*(\ol{M}_{n,1}(\bA))$ let 
$$ (Q\kappa^{n} (\alpha,\beta), \alpha_\infty) = \sum_{d \in
  H_2^G(X,\Q)} q^d \int_{\ol{M}_{n,1}^G(\bA,X,d)} \ev^* \alpha \cup
f^* \beta \cup \ev^*_\infty \alpha_\infty $$
using Poincar\'e duality; the pairing on the left is given by cup
product and contraction with the fundamental class of $X \qu G$.
\end{enumerate}
\end{definition}

\begin{theorem} \cite{qkirwan} 
{\rm (Quantum Kirwan morphism)} Suppose that $X$ is a smooth polarized
projective $G$-variety such that $G_\C$ acts locally freely on the
semistable locus of $X$.  The collection $(Q\kappa^n )_{n \ge 0}$ is a
morphism of CohFT algebras from $QH_G(X)$ to $QH(X \qu G)$.  If $X$ is
$G$-Fano in the sense that $c_1^G$ is positive on all rational curves
to the quotient stack $X/G_\C$, \ccomment{added definition of
  $G$-Fano} then the curvature $ Q\kappa^0$ vanishes, so $(
Q\kappa^n)_{n > 0} $ is a flat morphism of CohFT algebras.
\end{theorem}

 In order to compute presentations of the quantum cohomology of $X \qu
 G$ one would like to know that the quantum analog of Kirwan's
 surjectivity theorem, namely that the linearization of map $QH_G(X)
 \to QH(X \qu G)$ at a generic point is surjective.  In the case of
 free quotients $X \qu G$, the conjecture follows from Kirwan's
 theorem and linearity over the Novikov ring, using a filtration
 argument.

\subsection{Quantum Kirwan for toric quotients} 

In this section we describe the quantum Kirwan map in the case that
$G$ is a torus acting on a vector space $X$, so that $X \qu G$ is a
toric orbifold.  We sketch a proof that the kernel of the
linearization of the quantum Kirwan map is Batyrev's quantization of
the Stanley-Reisner ideal associated to the toric fan.  This
reproduces for example the presentation of the quantum cohomology of
weighted projective planes described in Coates-Corti-Lee-Tseng
\cite{coates:wps}.

\begin{example} (Weighted Projective Line $\P(2,3)$
\cite{gw:surject}) Let $\C_2$ resp. $\C_3$ denote the weight space for
$G_\C = \C^\times$ with weight $2$ resp. $3$ so that $X = \C_2 \oplus \C_3$
and $X \qu G = \P(2,3)$.  Let $\theta_1$ resp. $\theta_2$
resp. $\theta_3$ resp. $\theta_3^2$ denote the generator of the
component of $QH(X \qu G) \cong H(\ol{I}_{X \qu G}) \otimes
\Lambda_X^G$ with trivial isotropy resp. $\Z_2$ isotropy
resp. corresponding to $\exp( \pm 2 \pi i /3) \in \Z_3$.  Let $\xi \in
H^2_G(X)$ denote the integral generator corresponding to the
representation with weight $1$.  Then we have the following table for $Q\kappa^1(\xi^k)$:
\begin{equation} 
 \label{exp1}
 \begin{array}{|c|c|c|c|c|c|c|} 
\hline 
   k                & 0 & 1 & 2 & 3 & 4  & 5 \\ 
\hline
Q\kappa^1(\xi^k)    & 1 & \theta_1 & q^{1/3} \theta_3/6 & q^{1/2} \theta_2/18 &  q^{2/3} \theta_3^2/36  &  q/108 \\
\hline 
\end{array} .\end{equation} 
Indeed, identifying $H_2^G(X,\Q) \cong \Q$ so that $H_2^G(X,\Z) \cong
\Z$ we see from Theorem \ref{vw} that \ccomment{replaced /G with $/G_\C$
in a number of places}
\begin{eqnarray*}
 M_{1,1}^G(\bA,X,0) &=& \{(a_0,b_0) \neq 0 \}/G_\C \cong \P(2,3) \\
 M_{1,1}^G(\bA,X,1/3) &=& \{ (a_0, b_1 z + b_0), b_1 \neq 0 \}/G_\C \cong
 \C^2/\Z_3 \\
 M_{1,1}^G(\bA,X,1/2) &=& \{ (a_1z + a_0, b_1 z + b_0 ), a_1 \neq 0 \}/G_\C \cong \C^3/\Z_2\\
 M_{1,1}^G(\bA,X,2/3) &=& \{ (a_1 z + a_0, b_2 z^2 + b_1 z + b_0), b_2 \neq 0 \}/G_\C 
\cong \C^4/\Z_3 \\
 M_{1,1}^G(\bA,X,1) &=& \{ (a_2 z^2 + a_1 z + a_0, b_3 z^3 + b_2 z^2 + b_1
 z + b_0), (a_2,b_3) \neq 0 \} /G_\C 
\end{eqnarray*}
The map
$$ \sigma: \ol{M}_{1,1}^{G,\fr}(\bA,X,1/3) \to \C_2 \oplus \C_3, \quad u
\mapsto u(0)  $$
defines a section with a single transverse zero, leading to the
integral
$$ \int_{\ol{M}_{1,1}^G(\bA,X,1/3)
} \ev_1^* 6\xi^2 = \int_{\ol{M}_{1,1}^G(\bA,X,1/3)}
\ev^*_1 \Eul(\C_2 \oplus \C_3) = 1/3 .$$
The pairing on the sector $H(\on{pt}/\Z_3) \otimes \Lambda_X^G$ in
$QH(\P(2,3))$ is defined by contraction with the orbifold fundamental
class, that is, $[\on{pt}]/3$, which cancels the factor of $1/3$ in
the integral above yielding the $k=3$ column.  (Put another way,
$Q\kappa_1(6 \xi^2)$ is the push-forward under
$\ol{M}_{1,1}^G(\bA,X,1/3) \to {\rm pt}/\Z_3$, whose fiber is $\C_2
\oplus \C_3$.)  The other integrals are similar.  From \eqref{exp1}
one sees that $Q \kappa^1$ is surjective with kernel $ \xi^5 - q/108$.
Hence
$$ QH( \P(2,3)) = \Lambda_X^G[\xi] / (\xi^5 - q/108) $$
where $\Lambda_X^G$ is the Novikov field of fractional powers of a single
formal variable $q$.  Note that the quantum
Kirwan map is not surjective in this case without inverting $q$, that
is, over the Novikov ring instead of the Novikov field, and that
although the Novikov field involves fractional powers of $q$, the
relations have only integer powers.
\end{example} 

More generally let $X$ be a vector space and $G$ a torus acting freely
so that $X \qu G$ is a proper Deligne-Mumford toric stack (orbifold).
We identify $G =U(1)^k$ and let $\rho_1,\ldots, \rho_k \in \g^\dual$
denote the weights of the action on $X$ with $\dim(X) = k$.  We also
identify $H_2^G(X,\Z)$ with the coweight lattice $\g_\Z =
\exp^{-1}(1)$ in the Lie algebra $\g$.

\begin{definition} [Quantum Stanley-Reisner Ideal] 
Let $QSR^G(X) \subset QH_G(X)$ be the {\em quantum Stanley-Reisner
  ideal}, generated by the elements for $d \in H_2^G(X,\Z)$
$$ \prod_{ \rho_j(d) \ge 0} \rho_j^{\rho_j(d)} -
q^d \prod_{ \rho_j(d) < 0} \rho_j^{- \rho_j(d)} .$$
\end{definition} 

Batyrev \cite{bat:qcr} in the case of smooth toric varieties
conjectured that the quantum cohomology $QH(X \qu G)$ has a
presentation
\begin{equation} \label{presenttoric}
{QH}^G(X)/ QSR^G(X) \cong QH(X \qu G),
\end{equation} 
This conjecture was proved for semipositive toric varieties by
Givental in \cite{gi:eq}, Cox-Katz \cite{ck:ms}, and is false in
general as pointed out by Spielberg \cite{sp:gw}, at least for the
obvious generators. Iritani \cite{iri:gmt} proved that any smooth
projective toric variety has quantum cohomology canonically isomorphic
to the Batyrev ring $QH_G(X)/ QSR^G(X)$, using corrected generators.
Coates-Corti-Lee-Tseng \cite{coates:wps} generalized the presentation
to the case of weighted projective spaces.

\begin{example} If $G_\C = \C^\times$ acts on $X = \C^2$ with weights
$a,b \in \Z$ so that $X \qu G$ is the weighted projective line
  $\P(a,b)$ then the quantum Stanley-Reisner ideal is generated by
  $(a\xi)^a (b\xi)^b - q$.  Then with our conventions the quantum
  cohomology of $\P(a,b)$ has generators $\xi$ and fractional powers
  of $q$, the single relation is $(a\xi)^a (b\xi)^b = q$,
  c.f. \cite{coates:wps}.
\end{example} 

 \ccomment{some of the $\kappa$'s were $\kappa_X^G$, I changed them}
\begin{theorem} 
\label{batyrev} {\rm (Orbifold Batyrev conjecture)}
\cite{gw:surject} After suitable completion, the linearization $D_0
Q\kappa$ is surjective and the kernel of $D_0 Q\kappa$ is the quantum
Stanley-Reisner ideal $QSR^G(X)$, so that $ T_{Q\kappa(0)} QH(X \qu G)
\cong T_0 QH^G(X)/QSR^G(X)$.
\end{theorem}

We give a partial proof by showing that for any $d \in H_2^G(X,\Z)$,
\begin{equation} \label{qsreq}
\int_{[\ol{M}_1^G(\bA,X,d)]} \Eul(\oplus_j
\C_{\rho_j}^{\rho_j(d)}) \cup \ev_\infty^* [\on{pt}] = 1 .\end{equation}
Let 
\begin{multline} \sigma: \ol{M}_1^G(\bA,X,d) \to \prod_{\rho_j(d) \ge 0}
\C_{\rho_j}^{\max(0,\rho_j(d))}, \quad (u,z) \mapsto ( u_j^{(k)}(z)
)_{k=1,\ldots,\rho_j(d),j = 1,\ldots,N} \end{multline}
denote the map constructed from the derivatives of the evaluation map
at the marking $z_1$.  The map $\sigma$ gives a transverse section
with a single zero on the locus $\ev_\infty^{-1}(\on{pt}) \subset
M_{1,1}^G(\bA,X,d)$ and the remaining factor $\Eul(\oplus_j
\C_{\rho_j}^{\min(0,\rho_j(d))})$ is the obstruction bundle.

We claim that $\sigma$ is non-vanishing on the boundary strata.  Let
$(C,\lambda,\ul{z},u)$ be a stable scaled map with reducible domain,
and let $d' \neq d$ denote the homology class of the irreducible
component containing $z_1$.  Since at least two irreducible components
have positive energy, $([\omega_G],d') < ([\omega_G],d)$.  By
assumption $X \qu G$ is non-empty, which implies that the symplectic
class $[\omega_G]$ can be written as a positive combination of the
weights $\rho_j$.  Hence $\rho_j(d') < \rho_j(d)$ for at least one
$j$.  Furthermore, among $j$ such that $\rho_j(d') < \rho_j(d)$ there
must exist at least one such that $u_j$ is non-zero.  Indeed the sum
of $\C_{\rho_j}$ with $\rho_j(d' - d) \ge 0$ is part of the unstable
locus in $X$, and \ccomment{and and -> and } so no morphism $u$ with
only those irreducible components non-zero can be generically
semistable.  The $\rho_j(d') + 1$-st derivative of $u_j$ is then a
non-zero constant, so $\sigma(u) \neq 0$.  The equation \eqref{qsreq}
follows.

\subsection{Composition of morphisms of CohFT algebras and reduction
in stages}
\label{composesec}

In this section we describe a notion of {\em composition} of morphisms
of CohFT algebras.  This will make CohFT algebras into an
infinity-category, whose higher morphisms are {\em commutative
  simplices} of CohFT algebras. This composition plays a natural role
in the {\em quantum reduction with stages} conjecture relating the
quantum Kirwan maps for $G/K$ and $K$ with that for $G$, when $K
\subset G$ is a normal subgroup.  The definition of composition of
morphisms of CohFT algebras involves a moduli space of {\em $s$-scaled
  $n$-marked} affine lines defined as follows.

\begin{definition}  {\rm (Multiply-scaled curves)} 
An {\em $s$-scaled, $n$-marked curve} is a datum
$(C,\ul{z},\ul{\lambda})$ where $(C,\ul{z})$ nodal marked curve and
$\ul{\lambda} = (\lambda_1,\ldots,\lambda_s)$ is an $s$-tuple of
scalings as in Definition \ref{nodalscalings} and in addition
satisfying the following {\em balanced condition}:
\begin{enumerate}
\item[] For each irreducible component $C_i$ of $C$ and any two
  scalings $\lambda_j,\lambda_k$ not both $0$ or both $\infty$,
  \ccomment{added any two scalings} the ratio $(\lambda_j |
  C_i)/(\lambda_k | C_i) \in \P$ is independent of the choice of
  $C_i$. 
\end{enumerate} 
An $s$-scaled, $n$-marked line $C$ is {\em stable} if each irreducible
component with at least one non-degenerate scaling has at least two
marked or nodal points, and each irreducible component with all
degenerate scalings has at least three marked or nodal points.  The
{\em combinatorial type} of an $s$-scaled, $n$-marked affine line $C$
is the tree whose vertices $\Ve(\Gamma)$ correspond to irreducible
components, finite edges $\Edge_{< \infty}(\Gamma)$ to nodes, and
equipped with a labelling of the semi-infinite edges
$\Edge_\infty(\Gamma)$ by $\{ 0,\ldots, n \}$, and distinguished
subsets $\Ve^i(C) \subset \Ve(C)$ corresponding to irreducible
components on which the $i$-th scaling $\lambda_i$ is finite,
satisfying combinatorial versions of the monotone and balanced
conditions which we leave to the reader to write out.  
This ends the definition.
\end{definition}  

\begin{remark}  {\rm (More explanation of the balanced condition)}
\begin{enumerate} 
\item On any irreducible component $C_i$ of $C$ on which
  $\lambda_j,\lambda_k$ are both non-zero and finite,
  $\lambda_j,\lambda_k$ both have a double pole at the same point and
  so have constant ratio $(\lambda_j | C_i) / ( \lambda_k | C_i)$.
\item The balanced condition is equivalent to the condition that for
  each marking $z_i$, if $C_{i,j}^+$ denotes the unique component
  between $z_0$ and $z_i$ on which $\lambda_j$ is finite, then one of
  the three possibilities holds: $C_{i,j}^+ = C_{i,k}^+$ for all $i$
  and the ratio $(\lambda_j | C_{i,j}^+)/( \lambda_k | C_{i,k}^+)$ is
  independent of $i$; or $C_{i,j}^+$ is closer (in the sense of trees)
  to $z_0$ than $C_{i,k}^+$ for all $i$; or $C_{i,k}^+$ is closer to
  $z_0$ than $C_{i,j}^+$ for all $i$.
\end{enumerate} 
\end{remark}

Let $\ol{M}_{n,s}(\bA)$ denote the moduli space of isomorphism classes
of stable $s$-scaled, $n$-marked curves. 

\begin{remark} {\rm (Boundary divisors of the moduli of muliply-scaled lines)} 
The boundary of $\ol{M}_{n,s}(\bA)$ can be described as follows.
\begin{enumerate}
\item For any subset $I \subset \{ 1,\ldots n \}$ of order at least
  two there is a divisor
$$ \iota_I : D_I \to \ol{M}_{n,s}(\bA) $$
and an isomorphism 
$$ \varphi_I: D_I \to \ol{M}_{|I|+1}
 \times \ol{M}_{n - |I| +
  1,s}(\bA) $$
  corresponding to the formation of a bubble containing the markings
  $z_i, i \in I$ with zero scaling on that bubble, and all scalings
  zero on that bubble.
\item For any unordered partition $I_1 \sqcup \ldots \sqcup I_r $ of
  $\{1,\ldots, n\}$ of order at least two with each $I_j$ non-empty
  and non-empty subset $J \subset \{1,\ldots, s\}$ there is a divisor
$$ \iota_{I_1,\ldots,I_r,J} : D_{I_1,\ldots,I_r, J
  } \to \ol{M}_{n,s}(\bA) $$
with an isomorphism 
$$ \varphi_{I_1,\ldots,I_r,J} : D_{I_1,\ldots,I_r,
  J } \to \ol{M}_{r+1,s - |J|}(\bA) \times \prod_{i=1}^r
\ol{M}_{|I_i|+1,|J|}(\bA) $$
corresponding to the formation of $r$ bubbles containing markings
$I_j, j = 1,\ldots, r$ with the scalings $j \in J$ becoming finite
on those bubbles
and infinite on the component containing $z_0$, or, if 
$J  = \{ 1,\ldots, s \}$, 
$$ \varphi_{I_1,\ldots,I_r,J} : D_{I_1,\ldots,I_r, J } \to
\ol{M}_{r+1} \times \prod_{i=1}^r \ol{M}_{|I_i|+1,s}(\bA) .$$
\end{enumerate} 

\noindent The union of these divisors is the boundary of $M_{n,s}$:
$$ \partial \ol{M}_{n,s}(\bA) = \bigcup_{I \subset \{ 1,\ldots n \}}
D_I \cup \bigcup_{I_1,\ldots,I_r,J} D_{I_1,\ldots,I_r,J} .$$
\end{remark} 

\begin{definition} {\rm (Composition of morphisms of CohFT algebras)} 
Let $U_0,U_1,U_2$ be CohFT algebras.  Given morphisms 
$$\phi_{01}: U_0 \to U_1, \ \ \phi_{12}: U_1 \to U_2, \phi_{02}:U_0
\to U_2$$
we say that $\phi_{02}$ is the {\em composition} of $\phi_{01},
\phi_{12}$ if the map
$$ \phi_{02} \circ \iota_1: U_0^n \times H(\ol{M}_{n,2}(\bA)) \to U_2  $$
given by composing $\phi_{02}$ with the natural restriction map
$H(\ol{M}_{n,2}(\bA)) \to H(\ol{M}_{n,1}(\bA))$ agrees with the map
\begin{multline} (\phi_{12} \circ \phi_{01})^n: U_0^n \times H(\ol{M}_{n,2}(\bA)) \to U_2  \\
 (\alpha_1,\ldots,\alpha_n, \beta) \mapsto \sum_{r \leq s, I_1 \sqcup \ldots \sqcup I_r = \{ 1,\ldots, n \} }
\phi_{12}^s(\phi_{01}^{|I_1|}(\alpha_i,i \in I_1; \cdot ),  \\ \ldots,
\phi_{01}^{|I_r|}(\alpha_i, i \in I_r; \cdot), \phi_{01}^0(1),\ldots, 
\phi_{01}^0(1); \cdot)( \iota^*_{I_1,\ldots,I_r} \beta )/(s-r)! ,\end{multline}
where the dots indicate insertion of the Kunneth components of $
\iota^*_{I_1,\ldots,I_r}(\beta)$ with respect to the Kunneth
decompositions, and is well-defined if it involves only finite sums on
the right-hand-side (modulo $U_{0,a}$ for any $a \in \R$ if all CohFT
algebras are filtered.)  We call the resulting diagram a {\em
  commutative triangle of CohFT}.  Similarly one can define {\em
  commutative simplices} of CohFT algebras of higher dimension.
\end{definition} 

We now define a moduli space of {\em multiply scaled gauged maps} that
``lives above'' $\ol{M}_{n,s}(\bA)$.  Consider a chain of normal
subgroups $G = G_0 \supset G_1 \supset \ldots \supset G_s.$ Since
$G_j$ is normal and compact, $\g$ splits as a sum $\g = \g_j \oplus
\g_j'$, so there exists a subgroup $G_j' \subset G$ so that $G_j
\times G_j' \to G$ is a finite cover.  Let $X$ be a smooth projective
$G_\C$-variety.

\begin{definition} {\rm (Multiply-scaled affine gauged maps)}  An
 {\em $s$-scaled, $n$-marked stable affine gauged map} on the affine
 line $\bA$ with values in $X$ is a $s$-scaled, $n$-marked nodal curve
 $C$ equipped with a morphism $u$ from $C$ to the quotient stack $
 X/G_\C$ such that for each $j = 1,\ldots, s$,
\begin{enumerate}
\item ($G_{j,\C}'$-bundle where $\lambda_j$ is zero) on the
  irreducible components where $\lambda_j$ vanishes, the
  $G_{\C}$-bundle defined by the composition of $u$ with $X/G_\C \to
  B(G_\C)$ is induced from a $G_{j,\C}'$-bundle; 
\item ($G_{j,\C}$-stable point where $\lambda_j$ is infinite) if
  $\lambda_j(z) = \infty$, then $u(z)$ lies in the semistable locus
  for the action of $G_{j,\C}$.
\end{enumerate} 
An $s$-scaled nodal affine gauged map 
is {\em semistable} if each
irreducible component with some non-degenerate scalings has at least
two special points, and each bubble with all degenerate scalings has
at least three special points.  A multiply scaled affine gauged map is
{\em stable} if it has finite automorphism group.
\end{definition} 

Let $\ol{M}_{n,s}^G(\bA,X)$ denote the moduli space of isomorphism
classes of stable $s$-scaled, $n$-marked affine gauged maps on $\C$
with values in $X$. 

\begin{remark} \label{equivalences}
The divisor relations on $\ol{M}_{n,s}(\bA)$ naturally induce divisor
relations on $\ol{M}_{n,s}^G(\bA,X)$.  In particular,
$\ol{M}_{1,2}(\bA)$ is a projective line, and the linear equivalence
between $D_{ \{ 1 \}, \{ 1 \}}$, the divisor where the first scaling
has become infinite, and the subspace $\ol{M}_{1,1}(\bA)$ where the
two scalings have become equal induces an equivalence in homology in
$\ol{M}_{n,2}^G(\bA,X)$ between $\ol{M}_{n,1}^G(\bA,X)$ (embedded as
the subspace where the scalings are equal) and the union of the
pre-images of the divisors $D_{[I_1,\ldots,I_r], \{ 1 \} }$.
\end{remark} 

\begin{remark} {\rm (Equivariant quantum Kirwan morphism)} 
The quantum Kirwan morphism has the following equivariant
generalization.  If the action of $G$ extends to an action of a group
$K$ containing $G$ as a normal subgroup, then the quotient group $K/G$
acts on the moduli space $\ol{M}_{n,1}^G(\bA,X)$ and one obtains a
morphism
$$ \ev \times \ev_\infty: \ol{M}_{n,1}^G(\bA,X)/(K/G)_\C \to (X/K_\C)^{n} \times (X
\qu G)/(K/G)_\C .$$
Pairing with the virtual fundamental class defines a map 
$$ QH_K(X,\Q)^{n} \times H(\ol{M}_{n,1}(\bA),\Q) \to
QH_{G/K}(X \qu G,\Q).
$$
After extending the coefficient ring of $QH_{K/G}(X \qu G)$ from
$\Lambda_X^G$ to $\Lambda_{X \qu G}^{K/G}$ one expects this to
define a morphism of CohFT algebras
\begin{equation} \label{qpartial}
(Q\kappa_{K,G}^n)_{n \ge 0} : QH_K(X) \to QH_{K/G}(X \qu G)
  .\end{equation}
\end{remark} 

Consider the equivariant quantum Kirwan morphisms
$$ (Q\kappa_{K,G}^n)_{n \ge 0} : QH_K(X) \to QH_{K/G}(X \qu G) $$ 
$$ (Q\kappa_{K/G}^n)_{n \ge 0}: QH_{K/G}(X \qu G) \to QH(X \qu
K) $$
defined in \eqref{qpartial}.  The linear equivalence in Remark
\ref{equivalences} leads naturally to the

\begin{conjecture}  {\rm (Quantum reduction in stages)} 
Suppose that $X, K,G$ are as above, and the symplectic quotients by
$K$ and $G$ are locally free.  Then there is a commutative triangle of
CohFT algebras
$$\begin{diagram} \node{QH_K(X)} \arrow{se} \arrow[2]{e} \node{}
\node{QH(X \qu K)} \\ \node{} \node{QH_{K/G}(X \qu G)}\arrow{ne}
\node{} \end{diagram}$$
\end{conjecture} 
\noindent In particular, there is an equality of formal, non-linear maps
$$ Q \kappa_{K/G} \circ Q\kappa_{G,K} = Q\kappa_K .$$
More generally, given a chain $G = G_0 \supset G_1 \supset \ldots G_s$
as above one should obtain a {\em commutative simplex} of
CohFT algebras.  We leave it to the reader to formulate the precise
conjecture.

\section{Local structure of boundary divisors}
\label{local}

In this section and the next we give a precise description of the
group of invariant Cartier divisors on the moduli space of scaled
lines $\ol{M}_{n,1}(\bA)$.  We begin with a review of the local
description of $\ol{M}_{n,1}(\bA)$ given in Ma'u-Woodward
\cite{mau:mult}.

\begin{definition} {\rm (Colored trees)}  A {\em colored tree} $\Gamma$ is a finite connected 
tree consisting of a set of vertices
$$\Ve(\Gamma) = \{ v_1,\ldots, v_m \} $$ 
a set of (finite and semi-infinite) edges
$$\Edge(\Gamma) = \Edge_{< \infty}(\Gamma)
\cup \Edge_\infty(\Gamma) , \quad \Edge_\infty(\Gamma) = \{ e_0,\ldots, e_n \} $$
and a subset of {\em colored vertices}
$$\Ve^+(\Gamma) \subset \Ve(\Gamma)$$ 
such that the following condition is satisfied:
\begin{itemize}
\item[] {\rm (Monotonicity condition)} Any non-self-crossing path in
  $\Gamma$ from the root edge $e_0$ to any other semi-infinite edge
  $e_i, i > 0$ crosses exactly one colored vertex $v \in
  \Ve^+(\Gamma)$.
\end{itemize}
The tree $\Gamma$ is {\em stable} if the valency of any colored
resp. uncolored vertex is at least two resp. three.
\end{definition} 
 
We say that a vertex is ``above'' the colored vertices if it can be
connected to the root edge without crossing a colored vertex.  Let
$\Ve^\infty(\Gamma)$ be the set of vertices above the colored
vertices.  For any $v \in \Ve^\infty(\Gamma)$, let $\Ve^+(v)$ be the
set of colored vertices $v' \in \Ve^+(\Gamma)$ that are below
$\Ve^+(v)$, that is, connected by paths in $\Gamma$ that move away
from the root.

\begin{definition} 
 \label{balanced} {\rm (Balanced Labellings)}
A map $ \varphi: \Edge_{<\infty}(\Gamma) \to \C$ is {\em balanced} if
for all $v \in \Ve^\infty(\Gamma)$ and $v' \in \Ve^+(v)$, the product
$$ \prod_{e \in \gamma(v,v')} \varphi(e) $$
over edges $e$ in the non-self-crossing path $\gamma(v,v')$ from $v$
to $v'$ is independent of the choice of a colored vertex $v'$.
Let $V(\Gamma)$ denote the set of balanced labellings:
\begin{equation} \label{toric}
 V(\Gamma) := \{ \varphi:\Edge_{<\infty}(\Gamma) \to \C \, |  \,
\varphi \ \text{is balanced} \}.\end{equation}
\noindent The subset
$$T(\Gamma) := V(\Gamma) \cap \Map(\Edge_{<\infty}(\Gamma),\C^\times) $$
of points with non-zero labels is the kernel of the homomorphism 
$$ \Map(\Edge_{<\infty}(\Gamma),\C^\times) \to \Map(\Ve^\infty(\Gamma),\C^\times) $$
given by taking the product of labels from the given vertex to the
colored vertex above it, and is therefore an algebraic torus.  
\end{definition}

\begin{example} \label{prev} 
The tree $\Gamma$ in Figure \ref{fourex} is a balanced colored tree
with $n = 4$ and $ \nodes = 3$.  The space of balanced labellings is
$$V(\Gamma) = \lbrace(x_1, ..., x_6) \in \mathbb{C}^6 \hspace{0.1 cm}
| \hspace{0.1 cm} x_1 x_3 = x_1x_4 = x_2x_5 = x_2x_6, x_3 = x_4, x_5 =
x_6 \rbrace$$
and admits an action of the torus 
$$T(\Gamma) = \lbrace (x_1, ..., x_6)
\in V(\Gamma) \hspace{0.1 cm} | \hspace{0.1 cm} x_i \not= 0 \rbrace
\simeq (\mathbb{C}^*)^3 .$$
\end{example}

\begin{proposition}
\label{toricprop}  \cite{mau:mult} 
{\rm (Local structure of the moduli space of scaled lines)} 
There exists an
isomorphism of a Zariski open neighborhood of $M_{n,1,\Gamma}$ in
$M_{n,1,\Gamma} \times V(\Gamma)$ with a Zariski open neighborhood of
$M_{n,1,\Gamma}$ in $\ol{M}_{n,1}(\bA)$.
\end{proposition}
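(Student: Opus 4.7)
The plan is to construct an explicit family of stable nodal scaled $n$-marked affine lines over an open neighborhood of $M_{n,1,\Gamma} \times \{0\}$ in $M_{n,1,\Gamma} \times X(\Gamma)$, and to verify by a versality/dimension argument that it induces the claimed isomorphism onto a neighborhood of $M_{n,1,\Gamma}$ in $\ol{M}_{n,1}(\bA)$.

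First I would set up local smoothing parameters. Fix a point $p_0 = [\ul{\Sigma}, \ul{z}, \alpha] \in M_{n,1,\Gamma}$ and choose local analytic coordinates $u_e, w_e$ at the two branches of each node $e \in \Edge(\Gamma)$. The standard universal smoothing $u_e w_e = \varphi(e)$, together with variation of markings and colored-component scalings inside $M_{n,1,\Gamma}$, gives a family of marked nodal curves over an open neighborhood of $p_0 \times \{0\}$ in $M_{n,1,\Gamma} \times \C^{\Edge(\Gamma)}$. This is just the usual versal deformation of the underlying $n$-pointed nodal curve.

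Next I would compute how a scaling transports across smoothings. If $v' \in \Vert^{\col}(\Gamma)$ carries scaling $\alpha_{v'}$ and $v \in \Vert^-(\Gamma)$ is connected to $v'$ by edges $e_1,\dots,e_k$, then in the standard smoothing $u_{e_i} w_{e_i} = \varphi(e_i)$ a translationally invariant one-form on the colored component extends uniquely across each smoothed node, picking up a multiplicative factor of $\varphi(e_i)$ at each step: concretely the substitution $u = \varphi(e)/w$ sends $du$ to $-\varphi(e)\,dw/w^2$, and after choosing the correct trivialization of translationally invariant one-forms on the root-side affine line one gains a factor of $\varphi(e)$. The effective scaling on the surviving root-side component containing $v$ therefore equals $p(v,v')\,\alpha_{v'}$. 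A single globally defined scaling exists on this component precisely when this product is independent of $v'$, which is exactly the balanced condition of \eqref{toric}.

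Consequently the locus of $M_{n,1,\Gamma} \times \C^{\Edge(\Gamma)}$ over which the smoothing-plus-scaling construction yields a stable scaled curve is exactly $M_{n,1,\Gamma} \times X(\Gamma)$, and the construction defines a morphism $\Phi: U \to \ol{M}_{n,1}(\bA)$ from a Zariski open neighborhood $U$ of the zero section, restricting to the identity along $M_{n,1,\Gamma}$. To finish I would verify that $\Phi$ is \'etale along $M_{n,1,\Gamma} \times \{0\}$: deformation theory of a stable scaled nodal curve of type $\Gamma$ decomposes first-order deformations into equisingular ones (tangent to the stratum) plus node-smoothings subject exactly to the balanced constraint, and the matching of dimensions follows from \eqref{dimform} (giving $\dim X(\Gamma) = \#\Edge(\Gamma) - \#\Vert^{\col}(\Gamma) + 1$). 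Shrinking $U$ if necessary yields the claimed isomorphism. The main obstacle is the scaling-transport computation: one has to choose normalizations at each node carefully so that propagation across a chain rescales a translationally invariant one-form by precisely $p(v,v')$. Once this multiplicative rule is pinned down, the identification of the locus of compatible scalings with $X(\Gamma)$ is immediate, and the remainder reduces to standard gluing/deformation theory.
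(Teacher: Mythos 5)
This proposition is quoted from Ma'u--Woodward \cite{mau:mult} and the paper gives no proof of its own, so there is nothing internal to compare your argument against line by line. That said, your sketch identifies the right mechanism: the gluing parameter $\varphi(e)$ at a smoothed node $u_e w_e = \varphi(e)$ rescales a translationally invariant one-form by (a power of) $\varphi(e)$, so the existence of a single well-defined scaling on the merged root-side component is precisely the path-independence of $p(v,v')$, i.e.\ the balanced condition defining $X(\Gamma)$ in \eqref{toric}; your dimension count $\dim X(\Gamma) = \#\Edge(\Gamma) - \#\Vert^{\col}(\Gamma) + 1$ is consistent with \eqref{dimform}. Note, however, that the route suggested by the surrounding text (and used in the reference) is more elementary: $\ol{M}_{n,1}(\bA)$ is constructed via the embedding into a product of projective lines by the forgetful morphisms $f_{i,j}$ and $f_{i,j,k,l}$, and the local equations cutting out $X(\Gamma)$ are read off directly from the cross-ratio and difference coordinates near a stratum. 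That approach buys you the local model without invoking any deformation theory, at the cost of an explicit coordinate computation; yours is more conceptual but needs more foundational input.

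The genuine soft spot in your plan is the final \'etale/versality step. You assert that first-order deformations of a stable scaled nodal curve decompose into equisingular deformations plus node-smoothings subject to the balanced constraint, and that matching tangent dimensions suffices. But $\ol{M}_{n,1}(\bA)$ is singular along these strata (that is the whole point of Sections 3--4), so a tangent-space dimension count does not detect an isomorphism; you would need your family to be versal in a sense strong enough to identify the analytic local ring, or equivalently you would need to know in advance that the moduli space represents a functor with the expected infinitesimal structure --- which is close to what is being proved. There is also a normalization issue you wave at but should pin down: the naive consistency condition is that $p(v,v')\,\alpha_{v'}$ be independent of $v'$, which mixes the $M_{n,1,\Gamma}$-factor (carrying the scalings $\alpha_{v'}$) with the $X(\Gamma)$-factor; one must absorb $\alpha_{v'}$ into the gluing coordinate at the edge adjacent to $v'$ to obtain the clean product chart $M_{n,1,\Gamma}\times X(\Gamma)$. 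Neither issue is fatal, but as written the proposal is a correct outline rather than a complete proof.
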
    

We comment briefly on the proof.  Given a stable scaled line, one can
remove small disks around the nodes and glue together annuli using a
map $z \mapsto \varphi(e)/z$ to produce a curve with fewer nodes,
where $\varphi(e)$ is the {\em gluing parameter} associated to the
node.  In the case of the genus zero curves, the local coordinates
used to produce the disks are essentially canonical, and the balanced
condition guarantees that the scalings on the resulting curve are
well-defined.

\begin{figure}[h]
\begin{picture}(0,0)%
\includegraphics{cartierex.pstex}%
\end{picture}%
\setlength{\unitlength}{3947sp}%
\begingroup\makeatletter\ifx\SetFigFont\undefined%
\gdef\SetFigFont#1#2#3#4#5{%
  \reset@font\fontsize{#1}{#2pt}%
  \fontfamily{#3}\fontseries{#4}\fontshape{#5}%
  \selectfont}%
\fi\endgroup%
\begin{picture}(4544,1864)(2389,-1613)
\put(5277,-550){\makebox(0,0)[lb]{\smash{{\SetFigFont{8}{9.6}{\rmdefault}{\mddefault}{\updefault}{\color[rgb]{0,0,0}$e_2$}%
}}}}
\put(2812,-1180){\makebox(0,0)[lb]{\smash{{\SetFigFont{8}{9.6}{\rmdefault}{\mddefault}{\updefault}{\color[rgb]{0,0,0}$e_3$}%
}}}}
\put(3572,-1167){\makebox(0,0)[lb]{\smash{{\SetFigFont{8}{9.6}{\rmdefault}{\mddefault}{\updefault}{\color[rgb]{0,0,0}$e_4$}%
}}}}
\put(5421,-1207){\makebox(0,0)[lb]{\smash{{\SetFigFont{8}{9.6}{\rmdefault}{\mddefault}{\updefault}{\color[rgb]{0,0,0}$e_5$}%
}}}}
\put(6305,-1172){\makebox(0,0)[lb]{\smash{{\SetFigFont{8}{9.6}{\rmdefault}{\mddefault}{\updefault}{\color[rgb]{0,0,0}$e_6$}%
}}}}
\put(3634,-550){\makebox(0,0)[lb]{\smash{{\SetFigFont{8}{9.6}{\rmdefault}{\mddefault}{\updefault}{\color[rgb]{0,0,0}$e_1$}%
}}}}
\end{picture}%
\caption{A colored tree}
\label{fourex}
\end{figure}

Recall that normal affine toric varieties are classified by finitely
generated cones \cite{danilov:toric}, \cite{fu:in}.

\begin{definition}
\label{affinetoric}
 {\rm (Affine toric variety associated to a cone)} Let $V_\Z \subset
 V$ be a lattice, and $C \subset V$ a strictly convex rational cone.
 The {\em affine toric variety corresponding to the cone $C$} is the
 spectrum $V(C)$ of the ring $R(C^\dual)$ corresponding to the
 semigroup $C^\dual \cap V_\Z^\dual$, that is, the ring generated by
 symbols $f_\mu$ for $\mu \in C^\dual \cap V_\Z^\dual$ modulo the
 ideal generated by relations
\begin{equation} \label{toricrelns} 
\sum_i n_i \mu_i = \sum_j m_j \mu_j  \implies 
\prod_i f_{\mu_i}^{n_i} = \prod_j f_{\mu_j}^{m_j}.
\end{equation} 
Any normal affine toric variety is of the form $V(C)$ for some cone
$C$, obtained from $X$ by letting $C^\dual$ be the cone generated by
the weights of the action of $T$ on the coordinate ring and $C$ the
dual cone of $C^\dual$.
\end{definition} 

We wish to show that $V(\Gamma)$ of balanced labellings \eqref{toric}
is the toric variety associated to some cone $C(\Gamma)$.  Note that
the part of $\Gamma$ separated by the colored vertices from the root
of $\Gamma$ trivially affects $V(\Gamma)$ with additional independent
variables. Hence, for the rest of this section, it suffices to assume
that the colored tree $\Gamma$ does not contain any vertex below any
colored vertex.  Let $\eps_1,\ldots, \eps_n$ be a basis of $\t$, and
$\eps_1^\dual, \ldots, \eps_n^\dual$ the dual basis of $\t^\dual$.
Define a labelling
$$ w: \Edge_{< \infty}(\Gamma) \to \t^\dual $$
recursively as follows.  

\begin{definition} {\rm (Principal subtree and branch)}
We say that a subtree $\Gamma' \subset \Gamma$ is a {\em principal
  subtree} if it is a component of the tree obtained by removing the
vertex adjacent to the root edge.  The edge adjacent to the root edge
of $\Gamma'$ is called a {\em principal branch} of $\Gamma$.
\end{definition} 

Let $\Gamma_1, ..., \Gamma_p$
be the principal subtrees of $\Gamma$ and $d_1,\ldots, d_p$ the
principal branches.  

\begin{example}  For the example in Figure \ref{fourex}, there are
two principal subtrees, with principal branches $e_1,e_2$.
\end{example} 

\begin{definition} {\rm (Sum of Labels)}
Given a labelling denote by $s(\Gamma,w)$ the sum of the labels of the
edges of a non-self-crossing path from the principal vertex to a
colored vertex; a priori this depends on the choice of path but each
labelling we construct will have the property that $s(\Gamma,w)$ is
independent of the choice of path.
\end{definition} 

\begin{definition}  {\rm (Labelling of edges of a colored
tree by weights)} Let $\Gamma'$ be a subtree of $\Gamma$.
\begin{enumerate} 
\item[(Case 1)] $\Gamma'$ is a tree with one non-colored vertex
  $v_i$.  Label the edges below the vertex $v_i$ by $\eps_i^\dual$, that
  is, define $w(e) = \eps_i^\dual$ for every edge $e$ of $\Gamma'$.
\item[(Case 2)] $\Gamma'$ has $g >1 $ non-colored
  vertices. By induction, assume that we have equipped the edges of
  the principal subtrees $\Gamma'_1, ..., \Gamma'_p$ of $\Gamma'$ with
  labellings $w_i$.  We have thus labelled all the edges of $\Gamma$
  except for the principal branches; we denote $s_i :=
  s(\Gamma'_i,w_i)$.  Define
\begin{equation} \label{s} s = s(\Gamma') = \eps_g^\dual + s_1+ ... + s_p = \eps_g^\dual + ... + \eps_1^\dual. \end{equation}
Label the principal branch $d_i \in \Edge_{< \infty}(\Gamma')$ with
\begin{equation} \label{wdi}
w(d_i) = s - s_i = \eps_g^\dual + \sum \limits_{j \neq i} s_j.
\end{equation}
\end{enumerate}
By induction all the edges $e$ of $\Gamma$ become labelled by weights
$w(e)$.
\end{definition} 

\begin{figure}[h]
\begin{picture}(0,0)%
\includegraphics{labels.pstex}%
\end{picture}%
\setlength{\unitlength}{3947sp}%
\begingroup\makeatletter\ifx\SetFigFont\undefined%
\gdef\SetFigFont#1#2#3#4#5{%
  \reset@font\fontsize{#1}{#2pt}%
  \fontfamily{#3}\fontseries{#4}\fontshape{#5}%
  \selectfont}%
\fi\endgroup%
\begin{picture}(4544,1864)(2389,-1613)
\put(3572,-1167){\makebox(0,0)[lb]{{$\eps_1^\dual$}%
}}
\put(5421,-1207){\makebox(0,0)[lb]{{$\eps_2^\dual$}%
}}
\put(6305,-1172){\makebox(0,0)[lb]{{$\eps_2^\dual$}%
}}
\put(3634,-550){\makebox(0,0)[lb]{{$\eps_3^\dual + \eps_2^\dual$}%
}}
\put(2812,-1180){\makebox(0,0)[lb]{{$\eps_1^\dual$}%
}}
\put(5277,-550){\makebox(0,0)[lb]{{$\eps_3^\dual + \eps_1^\dual$}%
}}
\end{picture}%
\caption{An example of a labelling}
\end{figure}

\begin{example}
Figure 2 illustrates the labels of the edges of $\Gamma$ from Example \ref{prev}.
If we denote the left and right principal branches by $d_1$ and $d_2$ respectively, 
then $s_1 = \eps_1^\dual, s_2 = \eps_2^\dual, s = \eps_3^\dual + \eps_2^\dual + \eps_1^\dual.$ 
\end{example} 

\begin{lemma}  $s = s(\Gamma,w)$ is the sum of the labels of the edges of a
  non-self-crossing path from the principal vertex $v_g$ to a colored
  vertex and $s$ is independent of the path chosen. 
\end{lemma} 

\begin{proof}  By \eqref{wdi} the sum over a path through $\Gamma_i$
is $s_i + w(d_i) = s$, independent of $i$. 
\end{proof}  

Let $C(\Gamma)^\dual$ be the convex cone generated by the labels
above,
\begin{eqnarray*}
C(\Gamma)^\dual &=&  \on{hull}_{\Q_{\geq 0}} \ \{ w(e) \hspace{0.1 cm} | \hspace{0.1 cm}
e \in \Edge_{<\infty}(\Gamma) \} \\
&=& \on{hull}_{\Q_{\geq 0}} \ \bigcup \limits_{j = 1}^p C(\Gamma_j)^\dual
\cup \lbrace w(e_i) \hspace{0.1 cm} | \hspace{0.1 cm} 1 \leq i \leq p
\rbrace
\end{eqnarray*}
and let $C(\Gamma)$ denote the dual cone of $C(\Gamma)^\dual$. 

\begin{theorem} \label{spec} 
{\rm (Explicit description of the cone associated to balanced
  labellings)} The variety $V(\Gamma)$ is the toric variety associated
to $C(\Gamma)$ in the sense of Definition \ref{affinetoric}, in
particular, $V(\Gamma)$ is normal.
\end{theorem}  
\noindent The proof will be given after the following lemma. 

\begin{definition} {\rm (Equivalence of sets of edges)} 
Suppose $E', E''$ are two disjoint subsets of $\Edge(\Gamma)$.  We
write $E' \sim E''$ if there exists a vertex and two non-self-crossing
paths $\gamma_1$ and $\gamma_2$ from that vertex to some two colored
vertices so that $E'$ and $E''$ respectively contain exactly the edges
of the paths $\gamma_1$ and $\gamma_2$.
\end{definition}

\begin{example}  The set $E' = \{e_1,e_3 \}$ is equivalent
to $E'' = \{ e_2,e_6 \}$ in Example \ref{prev}.
\end{example}

\begin{lemma}\label{disj} 
 Suppose $E'$ and $E''$ are two disjoint multisets of elements of
 $\Edge(\Gamma)$. Then
\begin{equation} \label{equality} \sum \limits_{e' \in E'} w(e') = \sum \limits_{e'' \in E''} w(e'') \hspace{0.6 cm} \end{equation}
if and only if $E'$ and $E''$ can be partitioned into disjoint unions of
$\lbrace E'_1, ..., E'_r \rbrace$ and $\lbrace E''_1, ..., E''_r \rbrace$
where $E'_l \sim E''_l$ for $1 \leq l \leq r$.
\end{lemma}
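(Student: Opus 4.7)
The plan is to prove each direction by induction on the tree $\Gamma$.

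\emph{Direction $(\Leftarrow)$.} For each uncolored vertex $v$ of $\Gamma$ set $s_v := \sum_{v' \in V(\Gamma_v)} e_{v'}^*$, the sum over uncolored vertices of the subtree $\Gamma_v$ rooted at $v$ (and $s_v := 0$ if $v$ is colored). An induction on $\Gamma$ using \eqref{wdi} establishes the telescoping identity $w_d = s_{\on{top}(d)} - s_{\on{bot}(d)}$: at the principal vertex $v_g$ one has $w_{d_i} = e_g^* + \sum_{j \neq i} s_j = s_{v_g} - s_{v_i}$ by the recursion, and the formula propagates into the subtrees by induction. Hence $\sum_{d \in \gamma} w_d = s_v$ for any path $\gamma$ from $v$ to a colored vertex, and summing over the pairs $A_l \sim B_l$ yields \eqref{equality}.

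\emph{Direction $(\Rightarrow)$.} I use outer induction on $|\Edge(\Gamma)|$ with inner induction on $|A|+|B|$. Write $a_i := \mult_A(d_i)$, $b_i := \mult_B(d_i)$ (so $a_i b_i = 0$ by disjointness), and $A_j' := A \cap \Edge(\Gamma_j)$, $B_j' := B \cap \Edge(\Gamma_j)$; for a multiset $M$ of edges let $c_M(v)$ denote the coefficient of $e_v^*$ in $\sum_{d \in M} w_d$. Extracting the coefficient of $e_v^*$ in \eqref{equality} yields $\sum_i a_i = \sum_i b_i$ (from $v = v_0$) and $c_{A_j'}(v) - c_{B_j'}(v) = a_j - b_j$ for uncolored $v \in V(\Gamma_j)$. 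If all $a_i = b_i = 0$, each pair $(A_j', B_j')$ inherits the hypothesis on the smaller subtree $\Gamma_j$ and outer induction finishes. Otherwise $\sum a_i = \sum b_i > 0$, so there are $i \neq k$ with $a_i > 0, b_i = 0$ and $b_k > 0, a_k = 0$, and we extract a matched pair.

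The crux is the sub-lemma: \emph{if $T, U$ are disjoint multisets of $\Edge(\Gamma')$ in a subtree $\Gamma'$ rooted at an uncolored vertex $v'$ with $c_T(v) - c_U(v) = p \geq 1$ for every uncolored $v \in V(\Gamma')$, then $T$ contains a submultiset forming a path from $v'$ to a colored vertex.} I prove it by induction on $|\Edge(\Gamma')|$: writing $p_j, q_j$ for the multiplicities in $T, U$ of the principal branches $e_j$ at $v'$, disjointness forces $p_j q_j = 0$ while the identity at $v'$ yields $\sum_j(p_j - q_j) = p \geq 1$, so some $p_j \geq 1$ with $q_j = 0$. The analogous coefficient identity gives $c_{T_j}(v) - c_{U_j}(v) = p_j - q_j \geq 1$ on the sub-subtree $\Gamma'_j$ below $e_j$, where $T_j := T \cap \Edge(\Gamma'_j)$, so induction produces a path from the root of $\Gamma'_j$ to a colored vertex inside $T_j$; prepending $e_j$ gives the required path. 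Applying the sub-lemma to $(A_i', B_i')$ and $(B_k', A_k')$ (or taking $\gamma_A' = \emptyset$ if $d_i$ leads directly to a colored vertex), we obtain $\gamma_A' \subseteq A_i'$ and $\gamma_B' \subseteq B_k'$, and then $\gamma_A := \{d_i\} \cup \gamma_A' \sim \gamma_B := \{d_k\} \cup \gamma_B'$ are equivalent, disjoint paths from $v_0$ to colored vertices. The remainder $(A \setminus \gamma_A, B \setminus \gamma_B)$ is disjoint and still balanced by $(\Leftarrow)$, with strictly smaller $|A|+|B|$, so inner induction completes the argument.

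The main obstacle is pinpointing the correct form of the sub-lemma. The naive hypothesis ``$c_T(v) \geq 1$ for all $v$ implies $T$ contains a path from $v'$'' is \emph{false} in general: small counterexamples exist where $T$ consists of several principal branches but contains no complete path to a colored leaf. One must use both the full equality $c_T - c_U = p$ and the disjointness $T \cap U = \emptyset$; together they guarantee the existence of a principal branch $e_j \in T$ with $q_j = 0$ through which inductive descent preserves the hypothesis.
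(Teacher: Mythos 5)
Your proof is correct, and its core mechanism is the same as the paper's: extract the coefficient of $e_{v_0}^*$ to get $\sum_i a_i = \sum_i b_i$, use disjointness to force $a_i b_i = 0$, and push the remaining identity into each principal subtree, where it picks up the imbalance term $a_i - b_i$. Where you genuinely differ is in the inductive bookkeeping. The paper runs a single induction on the number of uncolored vertices and, in a subtree $\Gamma_i$ with $b_i = 0$, invokes the ``inductive hypothesis'' on the equation $\sum_{d\in A\cap\Gamma_i}w_d = a_i s_i + \sum_{d'\in B\cap\Gamma_i}w_{d'}$, concluding that $A\cap\Edge(\Gamma_i)$ splits off $a_i$ full root-to-leaf paths in addition to the matched pairs. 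That is really a strengthened statement --- one cannot simply absorb $a_i s_i$ into $B$ as $a_i$ phantom paths, since such paths need not be disjoint from $A$ --- and the paper leaves the strengthening implicit. Your sub-lemma (disjoint $T,U$ with $c_T(v)-c_U(v)$ equal to a single constant $p\ge 1$ at every uncolored vertex forces a root-to-leaf path inside $T$) is exactly the missing ingredient, stated and proved explicitly, and your remark that the naive version assuming only $c_T(v)\ge 1$ is false is accurate: it shows why both the constancy of the difference and the disjointness are needed for the descent to a principal branch with $p_j\ge 1$, $q_j=0$. Combined with the inner induction on $|A|+|B|$ that peels off one matched pair $\gamma_A\sim\gamma_B$ at a time (legitimate because $\sum_{d\in\gamma_A}w_d=\sum_{d\in\gamma_B}w_d$ by path-independence of $s$, so the remainder still satisfies the hypothesis), this yields a complete argument. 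What you lose relative to the paper is economy, since the paper's one-shot induction produces the whole partition of each subtree at once; what you gain is that every inductive hypothesis you invoke is literally a statement you have proved.
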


\begin{example} In Example \ref{prev}, let $E' = \{ 3e_1, 2e_3, e_4, e_5 \}$ and $E'' = \{3e_2, 4e_6 \}$ which satisfy \eqref{equality}.   We can write $E'_1 = \{ e_1, e_3\} \sim E''_1 = \{e_2, e_6\}, E'_2 = \{ e_1, e_3\} \sim E''_2 = \{e_2, e_6\}, E'_3 = \{ e_1, e_4\} \sim E''_3 = \{e_2, e_6\}, E'_4 = \{ e_5 \} \sim E''_4 = \{e_6\} $.
\end{example} 

\begin{proof}[Proof of Lemma \ref{disj}] One direction of the implication, that the equality 
\eqref{equality} holds if $E'$ and $E''$ can be partitioned, is
immediate from the definitions.  We only need to show the other
direction.  As before, it suffices to consider the case that there are
no non-colored vertices below the colored vertices.  When the number
of non-colored vertices is $1$, \ccomment{removed space} the statement of the lemma is
trivial.  Assume the proposition holds for any tree with number of
vertices less than $g$.  Consider a tree $\Gamma$ with $\nodes$
non-colored vertices.  Denote by $n_1, ..., n_p$ and $m_1, ..., m_p$
the multiplicities of the principal branches $e_{1}, ..., e_p$ in $E'$
and $E''$.  Since $E' \cap E'' = \emptyset$, we have $n_im_i= 0$.
Equation \eqref{equality} and the fact that $\eps_g^\dual$ appears only on
the edges adjacent to the root edge implies
\begin{equation} \label{equality1} \sum \limits_{i = 1}^p n_i 
= \sum \limits_{i = 1}^p m_i .\end{equation}
Similarly, the fact that the labels from each principal branch
are independent implies that 
\begin{equation*} 
-n_is_i + \sum 
\limits_{e' \in E' \cap \Edge_{<\infty}(\Gamma_{i})} w(e') = -m_is_{i} + \sum
\limits_{e'' \in E'' \cap \Edge_{<\infty}(\Gamma_{i})} w(e'')  \end{equation*} 
for every $1 \leq i \leq p$. For a fixed $i$, without loss of generality, we can assume
$m_i = 0$. Then
$$\sum \limits_{e' \in E' \cap \Gamma_{i}} w(e') = n_is_i +  
\sum \limits_{e'' \in E'' \cap \Gamma_{i}} w(e'').$$
Noting that $ s_i$ is the sum of labels over a non-self-crossing path
from $v_i$ to a colored vertex, we may replace $E'^i = E' \cap
\Edge_{<\infty}(\Gamma_{i})$ with an equivalent set which contains
$n_i$ copies $E'^i_j, j = 1,\ldots, n_i$ of the edges in such a path.
The complement of $E'^i_j, j = 1,\ldots, n_i$ in $E'^i$ has the same
sum of labels as $E''^i = E'' \cap \Edge_{< \infty}(\Gamma_i)$, so by
the inductive hypothesis there exists a partition of $E' \cap
\Edge_{<\infty}(\Gamma_{i})$ and $E'' \cap
\Edge_{<\infty}(\Gamma_{i})$ into $\lbrace E'^i_1, ..., E'^i_{r_i'}
\rbrace$ and $\lbrace E''^i_{1}, ..., E''^i_{r_i''} \rbrace$ such that
for $1 \leq j \leq n_i$, $E'^i_{j}$ are equal and for $n_i + 1 \leq j
\leq r_i$,
\begin{equation} \label{sim} E'^i_{j} \sim  E''^i_{j} .\end{equation} 
Since $E'$ contains $n_i$ principal branches $d_i$, we can add one
edge $d_i$ in each $E'^i_j$ for every $1 \leq j \leq n_i$. Hence,
after the modification, each set $E'^i_j$ contains exactly all the
edges of a path from the root of $\Gamma$ to a colored vertex in
$\Gamma_i$ .  Applying the same process for each $1 \leq i \leq p$, by
the first equality in \eqref{equality1} and by \eqref{sim}, we can
partition $E'$ and $E''$ into $\{ E'_1, ..., E'_r\}$ and $\{E''_1,
..., E''_r \}$ such that $E'_i \sim E''_i$ for every $1 \leq i \leq
r$.
\end{proof}    

\begin{proof}[Proof of Theorem \ref{spec}]
We must show that the balanced relations for $V(\Gamma)$ in Definition
\ref{balanced} are exactly those in the definition of the affine toric
variety associated to $C(\Gamma)$ in \eqref{toricrelns}.  So suppose
that $E' = \{ n_1 e_1, ..., n_N e_N \}$ and $E'' = \{m_1 e_1, ..., m_N
e_N \}$ are such that $\sum n_i w(e_i) = \sum m_j w(e_j)$, and so
define a relation as in \eqref{toricrelns}.  Lemma \ref{disj} yields
that $E'$ and $E''$ can be partitioned into $E'_1, ..., E'_r, E''_1,
..., E''_r$ so that $E'_i \sim E''_i$ for $1 \leq i \leq r$.  But
these are exactly the balanced relations in \ref{balanced}.
\end{proof} 

It follows from the theorem that the cone $C(\Gamma)$ corresponding to
the toric variety $V(\Gamma)$ is the cone dual to the $\Q_{\geq
  0}$-span of $C(\Gamma)^\dual$.  Next we find a minimal set
$G(\Gamma)$ of generators of $C(\Gamma)$ by an inductive argument on
the number of non-colored vertices $\nodes$ of $\Gamma$.
\begin{definition} {\rm (Generators of the cone associated to balanced labellings)} 
Define $G(\Gamma)$ inductively as follows for subtrees $\Gamma'
\subset \Gamma$:
\begin{enumerate} 
\item If $\nodes(\Gamma') = 1$ with vertex $v_i$, then $G(\Gamma') =
  {\eps_i}$.
\item If $\nodes > 1$, then
$G(\Gamma') = \lbrace \eps_{\nodes} + n_1(v_1 - \eps_{\nodes}) + ... + n_p(v_p -
  \eps_{\nodes}) 
| \hspace{0.1 cm} v_i \in
  G(\Gamma_i'), n_i \in \lbrace 0, 1 \rbrace \rbrace. $
\end{enumerate} 
\end{definition} 

Note that the elements in $G(\Gamma)$ are in the lattice $\Z^g$ spanned by the vectors $\eps_1, ..., \eps_g$. 

\begin{theorem}\label{generators} $G(\Gamma)$ is a minimal set of generators
of $C(\Gamma)$.
\end{theorem}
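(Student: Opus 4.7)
The plan is to prove this by induction on $\nodes(\Gamma)$. The base case $\nodes = 1$ is immediate: $\chi(\Gamma) = \R_{\ge 0}\, e_1^*$, so $C(\Gamma) = \R_{\ge 0}\, e_1$ is minimally generated by $\{e_1\}$. For the inductive step, $\Gamma$ has principal vertex $v_\nodes$ and principal subtrees $\Gamma_1,\ldots,\Gamma_m$, and we assume the theorem for each $\Gamma_j$. The first preparatory step is the auxiliary identity
\[
\langle w,\, s(\Gamma') \rangle \;=\; 1 \qquad \text{for all } w \in G(\Gamma'),
\]
proved by a parallel induction: writing $w = (1-|J|)\, e_{\nodes'} + \sum_{k \in J} w_k$ and $s(\Gamma') = e_{\nodes'}^* + \sum_k s(\Gamma'_k)$ from \eqref{s}, the pairing collapses to $(1-|J|) + |J| = 1$. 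A second preparatory observation, also by induction, is that $\chi(\Gamma_j)$ spans its ambient vector space (since $e_{\nodes_j}^*$ can be recovered as $w_{d_i} - \sum_{k \ne i} s_k$ using generators of the $\chi(\Gamma_j)$ by induction), so the dual cone $C(\Gamma_j)$ is pointed.

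To verify $G(\Gamma) \subseteq C(\Gamma)$, write a generator as $g_{J,w} := (1-|J|)\, e_\nodes + \sum_{i \in J} w_i$ with $w_i \in G(\Gamma_i)$. For an edge weight $w_d$ with $d \in \Edge(\Gamma_j)$, the pairing $\langle g_{J,w},\, w_d\rangle$ equals $0$ if $j \notin J$ (disjoint supports) or $\langle w_j, w_d\rangle \ge 0$ if $j \in J$ (by induction). For a principal-branch weight $w_{d_i} = e_\nodes^* + \sum_{j \ne i} s_j$, the auxiliary identity gives $\langle g_{J,w},\, w_{d_i}\rangle = (1-|J|) + |J \setminus \{i\}| = 1 - [i \in J] \ge 0$.

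For generation, given $v \in C(\Gamma)$, write $v = a\, e_\nodes + \sum_i v_i$ with $v_i$ supported on $\Gamma_i$; the edge-in-$\Gamma_i$ inequalities force $v_i \in C(\Gamma_i)$, so by induction $v_i = \sum_k c_{i,k}\, w_{i,k}$ with $c_{i,k} \ge 0$ and $w_{i,k} \in G(\Gamma_i)$. Setting $C_i = \sum_k c_{i,k} = \langle v, s_i\rangle$ (by the auxiliary identity) and $T = a + \sum_j C_j$, the inequality $\langle v, w_{d_i}\rangle = T - C_i \ge 0$ translates to $C_i \le T$ for all $i$. I then propose the explicit product-measure ansatz
\[
b_{J,w} \;:=\; T \prod_{i \in J} \frac{c_{i,\,w_i}}{T} \prod_{i \notin J}\Bigl(1 - \frac{C_i}{T}\Bigr) \;\ge\; 0,
\]
indexed over $J \subseteq \{1,\ldots,m\}$ and tuples $w = (w_i)_{i \in J}$ with $w_i \in G(\Gamma_i)$, for which a direct factor-by-factor marginal computation verifies $\sum_{J,w} b_{J,w}\, g_{J,w} = v$. (The degenerate cases $T = 0$ and individual $C_i = 0$ reduce by restriction to $\{i : C_i > 0\}$.)

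For minimality, suppose $g_{J_0, w^0} = \sum_{(J,w) \ne (J_0, w^0)} b_{J,w}\, g_{J,w}$ with $b_{J,w} \ge 0$. Pairing with $w_{d_{i_0}}$ for each $i_0 \in J_0$, which annihilates $g_{J_0, w^0}$, forces $b_{J,w} = 0$ unless $i_0 \in J$, hence $J \supseteq J_0$. Projecting onto the $\Gamma_{j_0}$-component for each $j_0 \notin J_0$ expresses $0$ as a non-negative combination of non-zero elements of the pointed cone $C(\Gamma_{j_0})$, forcing $J \subseteq J_0$; together, $J = J_0$. Reading off the $\Gamma_j$-component for $j \in J_0$ yields $w^0_j = \sum_w b_{J_0,w}\, w_j$, and the inductive minimality of $G(\Gamma_j)$ combined with pointedness of $C(\Gamma_j)$ forces the only contributing tuple to be $w = w^0$, a contradiction. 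The principal obstacle I anticipate is the generation step: generators with $|J| \ge 2$ carry a negative $e_\nodes$-coefficient, so decomposing a $v$ with negative $e_\nodes$-component requires a non-obvious combinatorial allocation, neatly resolved by the product ansatz through independence of marginals across subtrees; the auxiliary identity $\langle w, s(\Gamma')\rangle = 1$ is used essentially throughout.
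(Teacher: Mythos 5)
Your proof is correct, and its skeleton — induction on the number of uncolored vertices, the normalization identity $\langle s(\Gamma), v\rangle = 1$ for $v \in G(\Gamma)$, the pairing computations showing $G(\Gamma)$ lies in the dual cone, then generation and minimality — coincides with the paper's. The genuine difference is in the generation step, which is the heart of the argument. The paper decomposes $v = -\beta e_{g} + \sum_i z_i$ and then runs an iterative greedy procedure: it locates the minimal subtree mass $\lambda_m$, splits each $z_i$ into two pieces, peels off a non-negative combination carrying $-(m-1)\lambda_m e_{g}$, and repeats with the reduced coefficient $\beta' = \beta - (m-1)\lambda_m$, checking at each stage that the inequalities $\langle v, w_{d_i}\rangle \ge 0$ are preserved and arguing termination. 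Your product-measure ansatz $b_{J,w} = T\prod_{i\in J}(c_{i,w_i}/T)\prod_{i\notin J}(1 - C_i/T)$ replaces this loop with a single closed-form allocation whose correctness is a one-line marginal computation (total mass $T$, mass $C_i$ on the event $i \in J$, mass $c_{i,k}$ on $w_i = w_{i,k}$), using exactly the constraints $0 \le C_i \le T$ supplied by $\langle v, w_{d_i}\rangle \ge 0$ and $\langle v, s\rangle \ge 0$. This buys an explicit, verifiable decomposition with no termination argument, at the cost of having to dispose of the degenerate cases $T=0$ and $C_i=0$ separately (which you do). Your minimality argument is also a fully fleshed-out version of the paper's one-sentence projection claim: pairing against $w_{d_{i_0}}$ to pin down $J = J_0$ and then invoking pointedness plus the inductive minimality of $G(\Gamma_j)$ is exactly the detail the paper elides; note that in the last step you could also pair with $s(\Gamma_{j_0})$ instead of appealing to pointedness, since every element of $G(\Gamma_{j_0})$ pairs to $1$ with it.
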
 

\begin{example} 
The tree $\Gamma$ in Figure 1 can be split into two principal subtrees
$\Gamma_1$ and $\Gamma_2$, as illustrated in Figure 2. Since
$G(\Gamma_1) = \lbrace \eps_1 \rbrace$ and $G(\Gamma_2) = \lbrace \eps_2
\rbrace$, we obtain $G(\Gamma) = \lbrace \eps_1, \eps_2, \eps_3, \eps_1 + \eps_2 -
\eps_3 \rbrace$. The cone generated by $\lbrace \eps_1, \eps_2, \eps_3, \eps_1 + \eps_2
- \eps_3 \rbrace$ is the cone $C(\Gamma)$ corresponding to the toric
variety $V(\Gamma)$.
\end{example}

Denote by $\tilde{C}(\Gamma)$ the $\nodes$-dimensional cone spanned by
the vectors in $G(\Gamma)$.  To prove Theorem \ref{generators} we must
show that $C(\Gamma) = \tilde{C}(\Gamma)$.  

\begin{lemma} \label{pair} For $s$
as in \eqref{s}, for every $v \in G(\Gamma)$, $ \langle s, v \rangle =
1.$
\end{lemma} 

\begin{proof} 
This follows by induction on the number of vertices from the
observation that
$\langle s, v \rangle = 1 + \sum \limits_{i=1}^p n_i (\langle
s_i, v_i \rangle - 1) .$
\end{proof} 

\begin{proof}[Proof of Theorem \ref{generators}]
We show $\tilde{C}(\Gamma) \subset C(\Gamma)$ by induction on the
number $g$ of non-colored vertices.  The case $g = 1$ is obvious.
Suppose the claim is true for all colored trees with less than $g$
vertices.  Let $v \in G(\Gamma)$ with coefficients $n_i, i = 1,\ldots,
p$ and $w \in C(\Gamma)^\dual$.  If $v \in G(\Gamma_i)$ \ccomment{v
  not w} then $\langle w, v \rangle = n_i \langle w, v_i \rangle$ by
the inductive hypothesis.  Otherwise, since $C(\Gamma)^\dual$ is
spanned by $w(e), e \in E_{< \infty}(\Gamma)$, we may assume that $w =
w(e_i)$ for some $1 \leq i \leq p.$  By Lemma \ref{pair},
\begin{eqnarray*} 
\langle w, v \rangle &=& \langle s - s_i, v \rangle \\
&=& \langle s, v_i \rangle - n_i 
\langle s_i, v_i \rangle \\
&=& 1 - n_i \end{eqnarray*} 
Hence $\langle w, v \rangle \geq 0$.  Since this holds for all $v,w$,
we have $\tilde{C}(\Gamma) \subset C(\Gamma)$. \ccomment{changed
  wording, fixed mistakes}

Conversely, given $v \in C(\Gamma)$, we claim that $v$ is a
non-negative linear combination of elements in $G(\Gamma)$. For
$\nodes = 1$, the claim is trivial.  Assume the claim is true for all
trees with less than $g$ non-colored \ccomment{added non-colored}
vertices.  Let $\Gamma$ be a tree with $g$ vertices and $v \in
C(\Gamma)$.  In particular, $v$ pairs non-negatively with the weights
$w(e), e \in \Edge_{< \infty}(\Gamma_i)$ so by the inductive
hypothesis we can write $v$ as a sum
$$v = -c_{\nodes} \eps_{\nodes} + \sum_{i=1}^p 
\sum_{v
  \in G(\Gamma_i)} \lambda^{(i)}_v v$$
where $\lambda^{(i)}_v \geq 0$.  If $c_{\nodes} \leq 0$ then the claim
follows since $\eps_{\nodes} \in G(\Gamma)$.  If $c_{\nodes} > 0$, let
$$\lambda_i := \sum \limits_{v \in G(\Gamma_i)} \lambda^{(i)}_v.$$ 
We remark by Lemma \ref{pair},
\begin{equation} \label{nonneg} 
0 \leq \langle w(e_i), v \rangle = -c_{\nodes} + \sum \limits_{j \not=
  i} {\lambda_j} .\end{equation}
To write $v$ as a non-negative linear combination of elements in
$G(\Gamma)$, we proceed as follows.  Without loss of generality,
suppose that $\lambda_p$ is the minimum of $\{ \lambda_j \neq 0 \}$,
that is, the smallest positive $\lambda_j, j = 1,\ldots, p$.  Split
each sum
$$
\sum_{v
  \in G(\Gamma_i)} \lambda^{(i)}_v v
 = \sum \limits_{v \in G(\Gamma_i)} \gamma^{(i)}_v v+ \sum
\limits_{v \in G(\Gamma_i)} \delta^{(i)}_v v $$ 
where $\gamma^{(i)}_v, \delta^{(i)}_v \geq 0, \gamma^{(i)}_v +
\delta^{(i)}_v = \lambda^{(i)}_v$ and $\sum \limits_{v \in
  G(\Gamma_i)} \delta^{(i)}_v = \lambda_p.$
We can write $v$ as the sum of 
\begin{equation} \label{first} 
-(p-1)\lambda_p \eps_{\nodes} + \sum \limits_{i=1}^{p-1}\big(\sum
\limits_{v \in G(\Gamma_i)} \delta^{(i)}_v v \big) + \sum \limits_{v
  \in G(\Gamma_p)} \lambda^{(p)}_v v \end{equation} 
and
\begin{equation} \label{second} 
-c_{\nodes}'\eps_{\nodes} + \sum \limits_{i = 1}^{p-1} \sum \limits_{v \in G(\Gamma_i)} \gamma^{(i)}_v v
\end{equation} 
where
\begin{equation} \label{third}
 \quad -c_{\nodes}' = -c_{\nodes} + (p-1)\lambda_p.
\end{equation} 
Since
$$\sum \limits_{v \in G(\Gamma_i)} \delta^{(i)}_v = \sum \limits_{v
  \in G(\Gamma_p)} \lambda_{v}^{(p)} = \lambda_p,$$ 
the expression \eqref{first} is a non-negative linear combination of
elements of $G(\Gamma)$.  If $-c_{\nodes}' \geq 0$, the expression
\eqref{second} is already a nonnegative linear combination of elements
of $G(\Gamma)$ and we are done.  Otherwise, consider the smaller tree
$\Gamma'$ obtained from $\Gamma$ by removing $\Gamma_p$ and observe
that \eqref{second} lies in $C(\Gamma')$. Indeed, by definition, we
know $\gamma_v^{(i) }\geq 0$ and therefore it is sufficient to check
that
$$-c_{\nodes}'+ \sum \limits_{j \not= i, j = 1}^{p-1} \gamma_j \geq 0.$$
However, by definition and the equation \eqref{nonneg} we have
\begin{eqnarray*}
-c_{\nodes}'+ \sum \limits_{j \not= i, j = 1}^{p-1} \gamma_j &=& -c_{\nodes} +
(p-1)\lambda_p + \sum \limits_{j \not= i, j =1}^{p-1} (\lambda_j -
\lambda_p) \\
&=& -c_{\nodes} + \sum \limits_{j=1, j \not= i}^{p} \lambda_i \geq 0.\end{eqnarray*}
Thus, the expression \eqref{second} is in $C(\Gamma')$.  By the
inductive hypothesis, \eqref{second} is a nonnegative linear
combination of elements of $G(\Gamma')$.  Hence $v$ is a nonnegative
linear combination of elements in $G(\Gamma)$. Thus $C(\Gamma) \subset
\tilde{C}(\Gamma)$ and therefore, $C(\Gamma) = \tilde{C}(\Gamma).$
\ccomment{changed wording, added eqn nos} 

We argue by induction that $G(\Gamma)$ is a minimal set of generators of
$\tilde{C}(\Gamma)$ and $\tilde{C}(\Gamma)$ is nondegenerate, i.e no 
positive linear combinations of vectors in the cone are 0. It is easy to check
the claim when $g(\Gamma) = 1$. Given a tree $\Gamma$ with $g(\Gamma)$
non-colored vertices and $G(\Gamma_i)$ the constructed minimal set of 
generators for each nondegenerate cone $\tilde{C}(\Gamma_i)$,  suppose $v \in
G(\Gamma)$ is a non-negative linear combination of other elements in
$G(\Gamma)$.  By the induction hypothesis,  the projection of $v$ onto the space 
spanned by $G(\Gamma_i)$ is 0 for each  $i$ and thus by the nondegeneracy induction
hypothesis, it follows that $v = 0$. Now, suppose that a positive linear combination
of some elements in $G(\Gamma)$ is 0. In particular, its projections onto the space 
spanned by $G(\Gamma_i)$ is 0 for each $i$ and hence by the nondegeneracy induction
hypothesis, it follows that all the elements in the combination are 0. Therefore, $G(\Gamma)$ 
is a minimal set of generators of $\tilde{C}(\Gamma)$ and $\tilde{C}(\Gamma)$ is nondegenerate,
concluding the theorem.
 \end{proof}

By the description of the cone, the dimension of $V(\Gamma)$ equals
the number of non-colored vertices $\nodes$ above the colored
vertices plus the number of edges below the colored vertices.  On the
other hand, by the balanced condition in \ref{balanced},
$$\dim(V(\Gamma)) = \dim(T(\Gamma)) =|\Edge_{< \infty}(\Gamma)| -|\Ve^+(\Gamma)| + 1 .$$
The two formulas are easily seen to be equivalent, by considering the
map from vertices to edges given by taking the adjacent edge in the
direction of the root edge.  We also have a formula for the number of
rays in $C(\Gamma)$, which follows immediately from Theorem
\ref{generators}:
  
\begin{corollary}  \label{one} 
If the number of 1 dimensional faces of $C(\Gamma_i)$ is $r_i$ for $1
\leq i \leq p$, then the number of 1 dimensional faces of $C(\Gamma)$
is $r = (r_1 + 1)...(r_p + 1)$.
\end{corollary}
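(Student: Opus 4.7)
The plan is to deduce the corollary directly from the recursive description of the minimal generating set $G(\Gamma)$ given just before Theorem \ref{generators}, together with the statement of Theorem \ref{generators} that $G(\Gamma)$ is a minimal set of generators for $C(\Gamma)$. Since the one-dimensional faces of a finitely generated cone are in bijection with the elements of any minimal generating set, it suffices to count $|G(\Gamma)|$ recursively and check that the listed generators are pairwise distinct.

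First I would unfold the recursion
$$G(\Gamma) = \{ e_{\nodes} + \alpha_1(w_1 - e_{\nodes}) + \cdots + \alpha_m(w_m - e_{\nodes}) \mid w_i \in G(\Gamma_i),\ \alpha_i \in \{0,1\} \}.$$
For each index $i \in \{1,\ldots,m\}$, the contribution of the $i$-th slot is either $0$ (when $\alpha_i = 0$) or one of the $r_i$ vectors $w_i - e_{\nodes}$ with $w_i \in G(\Gamma_i)$ (when $\alpha_i = 1$). These are $r_i + 1$ mutually exclusive alternatives per slot, and the choices in different slots are independent, so the total number of candidate generators is $\prod_{i=1}^m (r_i + 1)$.

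The key step is to verify that these candidates are pairwise distinct, so that the count $(r_1+1)\cdots(r_m+1)$ is exact rather than an upper bound. Here I would argue as in the last paragraph of the proof of Theorem \ref{generators}: the subspace spanned by $G(\Gamma_i) \subset \Z^{\nodes}$ is disjoint in its support (in the coordinates labelled by the uncolored vertices of $\Gamma_i$) from those of $G(\Gamma_j)$ for $j \neq i$, so projecting onto each such subspace recovers exactly the summand $\alpha_i(w_i - e_{\nodes})$. Thus different choices of $(\alpha_i,w_i)$ yield different elements of $G(\Gamma)$; and by the minimality asserted in Theorem \ref{generators}, each of these generators spans a distinct one-dimensional face.

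I expect the only real obstacle to be the bookkeeping for distinctness: one has to be a bit careful in the base case and in how the $e_{\nodes}$ coordinate interacts with the recursion, since the same vector $e_{\nodes}$ appears as a shift inside every summand. The induction hypothesis already handles minimality within each $G(\Gamma_i)$, so the inductive step reduces to noting that fixing $(\alpha_1,\ldots,\alpha_m)$ and then the $w_i$'s for those $i$ with $\alpha_i=1$ determines the vector uniquely in $\Z^{\nodes}$. Once distinctness is in place, the multiplicative count $r = (r_1+1)\cdots(r_m+1)$ follows immediately.
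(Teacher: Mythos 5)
Your argument is correct and is exactly the justification the paper intends: the paper states this corollary as an immediate consequence of Theorem \ref{generators}, namely that the one-dimensional faces of the (pointed) cone $C(\Gamma)$ correspond to the elements of the minimal generating set $G(\Gamma)$, whose cardinality is $\prod_{i=1}^m (r_i+1)$ by the recursion. Your distinctness check via projection onto the disjointly supported coordinate blocks of the $\Gamma_i$ (with the identity $\langle s, v\rangle = 1$ from \eqref{pair} ruling out proportional generators) correctly fills in the bookkeeping the paper leaves implicit.
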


Next we describe the Weil and Cartier divisors in the local toric
charts.  Recall the description of invariant Weil divisors of an
affine toric variety $V(C)$ with cone $C$, see \cite{fu:in} or in the
more general setting of spherical varieties, \cite{br:gr}:

\begin{proposition} 
\label{weilone}  
\begin{enumerate} 
\item {\rm (Classification of invariant Weil divisors)} There is a
  bijection between invariant prime Weil divisors of $V(C)$ and the
  one dimensional faces of $C$
\item {\rm (Classification of invariant Cartier divisors)} There is a
  bijection between invariant Cartier divisors on $V(C)$ and linear
  functions on $C$ taking integer values on the intersection $C \cap
  V_\Z$.
\end{enumerate} 
\end{proposition} 

We sketch the construction of the bijections.  For the classification
of invariant Weil divisors, any one-dimensional face $C_1$ of $C$
corresponds to a codimension-one face $C_1^\dual$ of $C^\dual$.  The
projection of semigroup rings $R(C^\dual) \to R(C_1^\dual)$ defines an
inclusion of the corresponding affine toric varieties $V(C_1) \to
V(C)$.  \ccomment{changed language} For the classification of Cartier
divisors, recall that a Weil divisor is Cartier if it is the zero set
of a section of a line bundle.  On a normal affine toric variety, any
line bundle is trivial and any invariant Cartier divisor is defined by
a function that is semi-invariant under the torus action.  Such
functions correspond to lattice points $\lambda \in V_\Z^\dual$, where
the corresponding function is regular if $\lambda \in C^\dual \subset
V_\Z^\dual$.  If $v \in C$ is any vector generating an extremal ray,
then the order of vanishing of $\lambda$ on the divisor $D(v) \subset
V(C)$ corresponding to $v$ is $\lambda(v)$.  Thus one sees that a
combination $\sum n_v D(v)$ of invariant Weil divisors is Cartier, iff
there is an element $\lambda \in C^\dual$ such that $\lambda(v) = n_v$
for such $v \in C$.  More generally, for a not-necessarily affine
toric variety, an invariant Weil divisor is Cartier if there exists a
piecewise linear function on the fan whose values on the rays are the
multiplicities of the invariant prime Weil divisors.

We now specialize to the case of the toric variety $V(\Gamma)$
associated to the cone $C(\Gamma)$ with generators $G(\Gamma)$
identified in the previous section.  We identify the invariant prime
Weil divisors of $V(\Gamma)$ as follows.
\begin{definition} [Minimally complete edge sets]  \label{mc}
A subset $E \subset \Edge_{<\infty}(\Gamma)$ is {\em minimally complete} if and
only if each non-self crossing path from $v_g$ to a colored vertex
contains exactly one edge in $E$.
\end{definition}

Denote by $\mE_{mc}(\Gamma)$ the set of minimally complete subsets $E
\subset \Edge_{<\infty}(\Gamma)$.

\begin{example}
The minimally complete subsets of $\Edge(\Gamma)$, where $\Gamma$ 
is the tree in Figure 1, are $\{ e_1, e_2 \}$, $\{ e_1, e_5, e_6 \}$, 
$\{ e_2, e_3, e_4 \}$, $\{ e_3, e_4, e_5, e_6 \}$.
\end{example}

\ccomment{changed prop}
 \begin{proposition}   
\label{mcprop}
%
If the number of minimally complete subsets of
  $\Edge_{<\infty}(\Gamma_i)$ is $r_i$, then the number of minimally
  complete sets of $\Edge_{< \infty}(\Gamma)$ is $r = (r_1 + 1)...(r_p
  + 1).$
  \end{proposition} 

\begin{proof} 
Let $d_1,\ldots, d_p$ denote the edges adjacent to the root edge.
Each minimally complete set either contains $d_i$, or induces a
minimally complete set in the principal branch $\Gamma_i$, for each $i
= 1,\ldots, p$.  The claim follows.
\end{proof}

From Corollary \ref{one} and Proposition \ref{mcprop}, we obtain
\begin{corollary} \label{oneC}
The number of 1 dimensional faces of $C(\Gamma)$ equals the number of 
minimally complete subsets of $\Edge_{<\infty}(\Gamma)$. \end{corollary}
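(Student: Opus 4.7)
The plan is to argue by induction on the number $g(\Gamma)$ of non-colored vertices of $\Gamma$, matching the two multiplicative recursions already established for the quantities on each side of the corollary. By Corollary \ref{one}, the number of one-dimensional faces of $C(\Gamma)$ equals $(r_1+1)\cdots(r_m+1)$, where $r_i$ is the number of one-dimensional faces of $C(\Gamma_i)$ for each principal subtree $\Gamma_i$. By the immediately preceding corollary (the count of minimally complete subsets in terms of those of the principal subtrees), the number of minimally complete subsets of $\Edge(\Gamma)$ satisfies the same multiplicative recursion with the $r_i$ reinterpreted as counts of minimally complete subsets of $\Edge(\Gamma_i)$. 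Thus it suffices to verify that the two counts coincide in the base case $g(\Gamma)=1$.

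When $g(\Gamma)=1$ (and we continue to assume, as in the text, that $\Gamma$ has no non-colored vertex below a colored one), the unique non-colored vertex is the principal vertex $v_0$ and every edge of $\Gamma$ is an edge from $v_0$ to a colored vertex. The rule defining $G(\Gamma)$ gives $G(\Gamma)=\{e_1\}$, so $C(\Gamma)$ is a single ray with exactly one one-dimensional face. On the other hand, by the combinatorial lemma characterizing minimal completeness, a minimally complete subset meets each non-self-crossing path from $v_0$ to a colored vertex in exactly one edge; since here every such path is a single edge, the subset is forced to be the entire collection of edges incident to $v_0$, giving exactly one minimally complete subset. The base case checks.

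There is no substantive obstacle: both preceding corollaries provide the combinatorial content, and the present corollary is essentially a matter of comparing the two identical recursions $r=(r_1+1)\cdots(r_m+1)$ together with the observation that both counts equal $1$ in the base case. Induction on $g(\Gamma)$ then yields the claim.
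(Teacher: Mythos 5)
Your proposal is correct and matches the paper's intended argument: the text derives Corollary \ref{oneC} directly from the fact that both the number of rays of $C(\Gamma)$ (Corollary \ref{one}) and the number of minimally complete subsets satisfy the identical recursion $r=(r_1+1)\cdots(r_m+1)$ over the principal subtrees, with both counts equal to $1$ when $g(\Gamma)=1$. You have simply written out the induction that the paper leaves implicit.
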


We can now describe the set of invariant Weil divisors of $V(\Gamma)$
as follows.

\begin{proposition} There is a bijection between 
the set of invariant prime Weil divisors and the set
$\mE_{mc}(\Gamma)$. More explicitly, each prime invariant Weil divisor
has the form
$$D_E := \lbrace (x_1, ..., x_N) \in V(\Gamma) \hspace{0.1 cm}
| \hspace{0.1 cm} x_i = 0 \hspace{0.1 cm} \forall x_i \in E \rbrace$$
for some minimally complete subset $E$.
\end{proposition}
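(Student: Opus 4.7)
The plan is to combine Proposition \ref{weilone} with the explicit generators $G(\Gamma)$ of $C(\Gamma)$ produced in Theorem \ref{generators} and the path-characterization of minimally complete subsets from the preceding lemma. By Proposition \ref{weilone}, invariant prime Weil divisors of $X(\Gamma)$ are in bijection with the one-dimensional faces of $C(\Gamma)$; by Theorem \ref{generators} these rays are precisely $\R_{\ge 0} v$ for $v \in G(\Gamma)$; and by Corollary \ref{oneC} their count already equals $|\D(\Gamma)|$. So it remains to produce an explicit bijection $v \mapsto Y_v$ from $G(\Gamma)$ to $\D(\Gamma)$ and to identify the associated prime divisor with the ideal cut out by $\{x_d : d \in Y_v\}$.

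For a given $v \in G(\Gamma)$, let $D_v$ be the invariant prime divisor corresponding to the ray $\R_{\ge 0} v$. The coordinate ring of $X(\Gamma)$ is generated by the symbols $f_{w_d}$ for $d \in \Edge(\Gamma)$ (these are the functions $x_d$), and a toric monomial $f_\mu$ vanishes on $D_v$ precisely when $\langle \mu, v \rangle > 0$. I would therefore define
\[ Y_v := \{ d \in \Edge(\Gamma) : \langle w_d, v \rangle > 0 \},\]
so that $D_v = \{ x_d = 0, d \in Y_v\} = D_{Y_v}$ as a subscheme, and it suffices to show $Y_v$ is minimally complete and that the map $v \mapsto Y_v$ is bijective.

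The next step is an induction on $\nodes(\Gamma)$ using the recursive form $v = e_\nodes + \sum_{i=1}^m \alpha_i(w_i - e_\nodes)$ with $w_i \in G(\Gamma_i)$ and $\alpha_i \in \{0,1\}$. The pairing computations already carried out in the proof of Theorem \ref{generators} give $\langle w_{d_i}, v \rangle = 1 - \alpha_i$ for a principal branch and $\langle w_d, v \rangle = \alpha_j \langle w_d, w_j \rangle$ for $d \in \Edge(\Gamma_j)$. Consequently,
\[ Y_v = \{ d_i : \alpha_i = 0\} \;\cup\; \bigcup_{j : \alpha_j = 1} Y_{w_j},\]
where $Y_{w_j} \subset \Edge(\Gamma_j)$ is the minimally complete subset produced by the inductive hypothesis. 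This decomposition matches exactly the recursive characterization of minimally complete subsets coming from the path lemma: for each $i$, either the path from $v_0$ to a colored vertex of $\Gamma_i$ is stopped at $d_i$ ($\alpha_i = 0$) or it must be stopped by a minimally complete subset of $\Gamma_i$ ($\alpha_i = 1$). Thus $Y_v \in \D(\Gamma)$, and the recursion shows $v \mapsto Y_v$ is a bijection onto $\D(\Gamma)$. Primality and the explicit description of $D_Y$ then follow because each $D_v$ is an orbit closure of a ray in a normal affine toric variety, hence cut out by the vanishing of exactly those coordinate monomials $f_{w_d}$ with $\langle w_d, v \rangle > 0$.

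The main obstacle will be the careful bookkeeping in the inductive step, in particular checking that distinct generators $v \neq v'$ yield distinct $Y_v \neq Y_{v'}$ (the bijection of sets is forced by cardinalities but showing injectivity directly clarifies the identification) and that the scheme-theoretic locus $\{x_d = 0, d \in Y_v\}$ is actually irreducible and coincides with the toric divisor $D_v$, rather than a larger non-reduced or reducible closed subset; both points reduce to the minimality clause in Definition \ref{mincomp}, which guarantees no further coordinate can be forced to vanish by the toric relations \eqref{toricrelns}.
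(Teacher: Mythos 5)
Your proof is correct, but it runs in the opposite direction from the paper's. The paper starts from the orbit strata: it shows that any invariant Weil divisor contains the closure $D_Y$ of an orbit, computes that $\dim D_Y = \nodes(\Gamma_1)+\cdots+\nodes(\Gamma_m) = g-1$ exactly when $Y$ is minimally complete (and that the codimension is at least $2$ otherwise), and then deduces primality and exhaustiveness from the numerical coincidence of Corollary \ref{oneC} between rays of $C(\Gamma)$ and minimally complete subsets. You instead start from the rays: you take the explicit minimal generators $v \in G(\Gamma)$ from Theorem \ref{generators}, use the pairings $\langle w_{d_i}, v\rangle = 1-\alpha_i$ and $\langle w_d, v\rangle = \alpha_j\langle w_d, v_j\rangle$ already computed there to read off the vanishing set $Y_v$, and match the resulting recursion against the path characterization of minimal completeness. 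Both arguments are sound and lean on the same two external inputs (Proposition \ref{weilone} and the count in Corollary \ref{one}/\ref{oneC}). What your route buys is that it simultaneously establishes the explicit formula $v_Y = e_{\nodes} + \sum_{i\in I}(v_i - e_{\nodes})$ for the ray attached to $D_Y$, which the paper proves separately in the proposition immediately following this one; what it costs is that you must invoke the orbit--cone correspondence to see that the coordinate vanishing locus $\lbrace x_d = 0,\ d\in Y_v\rbrace$ is irreducible and coincides with the orbit closure, a point the paper's dimension count handles without appeal to the dual description. Your closing concern about irreducibility is resolved exactly as you indicate: since the semigroup is generated by the $w_d$ and each $\mu$ with $\langle\mu,v\rangle>0$ involves some $w_d$ with $\langle w_d,v\rangle>0$, the locus cut out by the coordinates in $Y_v$ equals the closure of the orbit attached to the face $\R_{\ge 0}v$, hence is prime.
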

\begin{proof}
Given a minimally complete edge set $E \subset E_{< \infty}(\Gamma)$,
for each principal subtree $\Gamma_i$ of $\Gamma$, either $x_{d_i} =
0$ or $D_E$ induces a minimally complete subset $E_i \in
\D(\Gamma_i)$.  By induction on the number of non-colored vertices,
the dimension of $D_E$ is $\nodes(\Gamma_1) + ... + \nodes(\Gamma_p) =
g - 1.$ Thus $D_E$ is a subvariety of $V(\Gamma)$ of codimension 1.
Since $V(\Gamma)$ is the closure of $T(\Gamma)$, the subvariety $D_E$
is the closure of the orbit $D_E \cap T(\Gamma)$ and so a prime Weil
divisor.  From Proposition \ref{weilone} and Proposition \ref{oneC},
the number of prime Weil divisors equals the number of $1$ dimensional
faces of $C(\Gamma)$ which equals the number of minimally complete
subset of $\Edge_{<\infty}(\Gamma)$. Therefore the invariant prime
Weil divisors of $V(\Gamma)$ are exactly all $D_E$, where $E \subset
\Edge(\Gamma)$ is minimally complete.
\end{proof}

\begin{example}
The invariant prime Weil divisors of $V(\Gamma)$ from Example
\ref{prev} are $ D_{ \lbrace 1, 2 \rbrace}, D_{ \lbrace 1, 5, 6
  \rbrace}$, $D_{\lbrace 2, 3, 4 \rbrace}, D_{\lbrace 3, 4, 5, 6
  \rbrace},$ corresponding to the partitions 
$$ \{ \{ 1 \}, \{ 2 \} ,
  \{ 3 \}, \{ 4 \} \}, \{ \{ 1 , 2 \} , \{ 3 \}, \{ 4 \} \}, \{ \{ 1
    \}, \{ 2 \} , \{ 3 , 4 \} \} \{ \{ 1, 2 , 3 , 4 \} \}$$ 
of the labels of the markings $\{1 ,2,3,4 \}$.
\end{example}

We now describe inductively the correspondence between the rays of
$C(\Gamma)$ and elements in $\mE_{mc}(\Gamma)$.  Let $D = D_E$ be a
Weil divisor as above.  Unless $e_{d_i} \in E $, the principal subtree
$\Gamma_i$ has an induced Weil divisor $D_{E_i} \subset V(\Gamma_i)$.
Suppose that $1$ dimensional face of $C(\Gamma_i)$
corresponding to the Weil divisor $D_{E_i}$ is $v_i \in G(\Gamma_i)
\subset G(\Gamma)$. Let
$$I(E) =
\lbrace i \hspace{0.1 cm} | \hspace{0.1 cm} e_{d_i} \notin E, 1 \leq i
\leq p \rbrace.$$

\begin{proposition} Let
$E \in \mE_{mc}(\Gamma)$.  The one-dimensional face of $C(\Gamma)$
  corresponding to the Weyl divisor $D_E \subset V(\Gamma)$ is
  generated by
\begin{equation} v_{E} = \eps_{\nodes} + \sum \limits_{i \in I(E)}(v_i - \eps_{\nodes}).
\end{equation}
\end{proposition}

\begin{proof}
We must show that $v_E$ is non-zero exactly on the
  weights $w(e)$ for $e \in E$.  This is automatically true by the
  inductive hypothesis for the edges except for the principal
  branches, that is, if $e \in \Edge_{< \infty}(\Gamma_i)$ then $\lan
  v_E, w(e) \ran =  \lan v_i, w(e) \ran \neq 0$ iff $e \in E_i$.  For the
  principal branches, the claim follows from \ref{pair}.
\end{proof}

Next we identify the invariant Cartier divisors in $V(\Gamma)$.  For
each vertex $v_k$ of $\Gamma$, denote by $\Gamma_k$ the subtree below
$v_k$ in $\Gamma$ and which contains the edge right above $v_k$ as its
distinguished root.  That is, $\Gamma_k$ is the connected component of
$\Gamma - \{ v_k \}$ not containing the root edge.  We define
$$\D_k = \lbrace D_E  \  | \  E \in \mE_{mc}(\Gamma), \ 
E \cap \Edge_{< \infty}( \Gamma_k ) \neq \emptyset \rbrace,  \quad 
D_k = \sum_{D \in \D_k} D.$$
Hence if $D_E \in \D_k$, $E$ does not contain edges of $\Gamma$ 
which are above $v_k$. 
\begin{proposition} \label{gen} The group of invariant 
Cartier divisors is generated by $D_1, ..., D_g$.
\end{proposition}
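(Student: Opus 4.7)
The plan is to produce, for each non-colored vertex $v_k$ of $\Gamma$, an explicit lattice point $u_k\in M=\Hom(\Z^g,\Z)$ witnessing $D_k$ as a Cartier divisor, and then to argue that the resulting $g$ elements generate $M$ over $\Z$. By the classification theorem cited just above the statement, the Weil divisor $D_k=\sum_Y a_Y^k D_Y$ will be Cartier precisely when there exists $u_k\in M$ with $\langle u_k,v_Y\rangle=a_Y^k$ for every minimally complete $Y$, where $v_Y$ is the ray generator built in the previous section. The coefficient $a_Y^k$ equals $1$ if $Y\cap\Edge(\Gamma_k)\neq\emptyset$ and $0$ otherwise; a short combinatorial observation, using the characterization of minimally complete sets as cuts, shows this is equivalent to $v_k$ lying on the root side of the cut determined by $Y$.

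For each $v_k$ I will take $u_k:=s_k\in M$, where $s_k$ is the sum of the weights $w_d$ along the edges of any non-self-crossing path from $v_k$ down to a colored vertex; by construction of the $w_d$ this is path-independent. By inspection, $s_k$ lies in the span of those dual basis elements $e_j^*$ with $v_j$ in the subtree $\Gamma_k$ rooted at $v_k$, and $u_0=s(\Gamma)$. The next step is to verify $\langle u_k,v_Y\rangle=a_Y^k$ by induction on $\nodes$. The case $v_k=v_0$ is immediate from \eqref{pair}, since then the pairing is always $1$ and $a_Y^0\equiv 1$. Otherwise $v_k$ lies in some principal subtree $\Gamma_i$, and the explicit expansion $v_Y=e_g+\sum_{j\in I}(v_j^Y-e_g)$ with $I=\{j:d_j\notin Y\}$ from the preceding proposition lets me split into two sub-cases. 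If $d_i\in Y$ then the cut sits above $v_k$, so $a_Y^k=0$; on the arithmetic side the pairing vanishes because $\langle s_k,e_g\rangle=0$ (the support of $s_k$ does not contain $e_g^*$) and $\langle s_k,v_j^Y\rangle=0$ for every $j\neq i$ (the support of $v_j^Y$ is confined to $\Gamma_j$, disjoint from $\Gamma_k\subseteq\Gamma_i$). If $d_i\notin Y$ then only the $j=i$ term survives and the pairing reduces to one inside $\Gamma_i$, where the inductive hypothesis delivers the desired indicator.

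For the generation statement I induct again on $\nodes$. The inductive hypothesis implies that, for each $i$, the $u$-elements attached to the vertices of $\Gamma_i$ generate $M(\Gamma_i)$, which embeds in $M$ as $\bigoplus_{v_j\in\Gamma_i}\Z e_j^*$. The key identity, read off directly from \eqref{s}, is
\[
e_g^* \;=\; u_0 \;-\; \sum_{i=1}^m u_{\pi(i)},
\]
where $\pi(i)$ denotes the principal vertex of $\Gamma_i$; this recovers the one remaining dual basis element as an explicit integer combination of the $u_k$'s, completing the induction. The main obstacle will be the bookkeeping inside the Cartier verification: one must carefully track which $v_j^Y$ appear in the expansion of $v_Y$, and exploit the confinement of the support of $s_k$ to $\Gamma_k$ in order to annihilate contributions coming from sister subtrees.
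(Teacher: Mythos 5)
Your proposal is correct and follows essentially the same route as the paper: the paper also takes the witness $u_k=s_k=s(\Gamma_k)$, verifies $\langle s_k,v_Y\rangle=1$ for $Y\in\D_k$ and $0$ otherwise via restriction to $\Gamma_k$ and the identity \eqref{pair}, and then uses exactly your key relation $s_k-\sum_{v_kv_i\in\on{B}(\Gamma_k)}s_i=e_k^*$ to see that the $s_k$ generate $M$. Your inductive case analysis on whether $d_i\in Y$ is just a more explicit unwinding of the paper's one-line restriction argument.
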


\begin{example}
The group of invariant Cartier divisors of $V(\Gamma)$, where $\Gamma$
is the tree in Figure 1, is generated by $$\{ D_{\{ 1,2 \}} + D_{\{ 1,
  5, 6 \}} + D_{\{ 2, 3, 4 \}} + D_{\{ 3, 4, 5, 6 \}}, D_{\{ 2, 3, 4
  \}} + D_{\{ 3, 4, 5, 6 \}}, D_{\{ 1, 5, 6 \}} + D_{\{ 3, 4, 5, 6 \}}
\}.$$
Thus $n_{\{ 1,2 \} }D_{\{ 1,2 \}} + 
n_{\{ 3,4,5,6 \}}D_{\{ 3,4,5,6 \}} + 
n_{\{ 1,5,6 \}}D_{\{ 1,5,6 \}} + 
n_{\{ 2,3,4 \}}D_{\{ 2,3,4 \}}$ 
is a Cartier divisor if and only  if 
$n_{\{ 1,2 \} } +  n_{\{ 3,4,5,6 \}} = 
n_{\{ 1,5,6 \}} + n_{\{ 2,3,4 \}}.$ 
\end{example}

\begin{proof}[Proof of Proposition \ref{gen}] We first check that $D_k$ is a Cartier divisor. 
Recall the notation in \eqref{s}, $s_k = s(\Gamma_k) \in \t_\Z^\dual$
for each vertex $v_k$. Note that $s_k$ satisfies $\langle s_k, v_E
\rangle = 1$ if $E \in \D_k$ and $ \langle s_k, v_E \rangle = 0$
otherwise.  Indeed, for each $E \in \D_k$, $D_E$ induces another
Cartier divisor $D_{E_k}$ and by \ref{pair}, we have $\langle s_k,
v_{E_k} \rangle = 1$. This implies $\langle s_k, v_E \rangle = 1$. On
the other hand, if $E \notin \D_k$, then $E$ does not contain any
edges in $\Gamma_k$.  Thus, $\langle s_k, v_E \rangle = 0.$ Hence
$D_k$ is a Cartier divisor.  

Next we check that $D_k, k = 1,\ldots, g$ generate the group of
invariant Cartier divisors.  \ccomment{added invariant} Note that $s_k = {\eps_k}^\dual \ \on{mod}
\ \eps_1^\dual, \ldots, \eps_{k-1}^\dual$.  It follows by an inductive
argument that $s_1,\ldots, s_g$ generate $\t_\Z^\dual$ so that $D_1, ...,
D_g$ generate the group of Cartier divisors of $V(\Gamma)$.
\end{proof}

We have the following description of the group of 
 invariant Cartier divisors of
$V(\Gamma)$.  Let $w$
\ccomment{changed to lower-case, since all other similar notations are lower-case
(p,n etc.)}  be the number of prime Weil invariant divisors
of $V(\Gamma)$.

\begin{theorem} \label{char} 
 $\sum \limits_{D} n_{D}D$ is a Cartier divisor of $V(\Gamma)$ if and
  only if
$\sum \limits_{D} m_{D}n_{D} = 0$
for every $(m_{D})_D \in \mathbb{Z}^w$ that satisfies $\sum \limits_{E:e
  \in E} m_{D_E} = 0$ for every edge $e \in \Edge_{< \infty}(\Gamma)$. 
\end{theorem}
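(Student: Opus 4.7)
My plan is to recast the theorem as an equality of two sublattices of $\mathbb{Z}^W$. Let $\Phi \colon M \to \mathbb{Z}^W$ send $u \mapsto (\langle u, v_Y \rangle)_Y$, so that by Proposition \ref{gen} together with the toric classification recalled just above it, $\im \Phi$ is precisely the lattice of $T$-invariant Cartier divisors. The condition in the theorem says $n \in (\ker B)^\perp$, where $B \colon \mathbb{Z}^W \to \mathbb{Z}^{\Edge(\Gamma)}$ is the incidence map $(m_D)_D \mapsto (\sum_{Y \ni i} m_{D_Y})_i$. Writing $C_i := \sum_{Y \ni i} D_Y$ for each edge $i$, the transpose $B^T$ sends $e_i \mapsto C_i$, so $\im B^T = \mathbb{Z}\text{-span}\{C_i\}$. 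The goal becomes $\im \Phi = (\ker B)^\perp$.

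The first key step is the combinatorial identity
\[
D_k \;=\; \sum_{i \in P} C_i \quad \text{in } \mathbb{Z}^W,
\]
where $P$ is any non-self-crossing path from $v_k$ to a colored vertex. Its proof is one line: for any minimal cut $Y$, the root-to-colored-leaf path extending $P$ through the root-to-$v_k$ segment meets $Y$ in exactly one edge; that edge either lies on $P$ (giving $Y \in \D_k$ and $\sum_{i \in P} \mathbf{1}[i \in Y] = 1$) or lies in the root-to-$v_k$ segment (giving $Y \notin \D_k$ and $\sum_{i \in P} \mathbf{1}[i \in Y] = 0$). Pairing against any $m \in \ker B$ now yields $\langle m, D_k \rangle = \sum_{i \in P}\sum_{Y \ni i} m_{D_Y} = 0$, so by Proposition \ref{gen} every Cartier divisor lies in $(\ker B)^\perp$; i.e.\ $\im \Phi \subseteq (\ker B)^\perp$.

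For the reverse inclusion I exhibit each $C_i$ as Cartier. If $i$ runs from $v_p$ to a colored child, a short check gives $i \in Y \Leftrightarrow Y \in \D_p$, so $C_i = D_p = \Phi(s_p)$. If $i$ runs from $v_p$ to a non-colored child $v_u$, minimal completeness yields $i \in Y \Leftrightarrow Y \in \D_p \setminus \D_u$, so $C_i = D_p - D_u = \Phi(s_p - s_u)$. Hence $\im B^T \subseteq \im \Phi$, and the standard $\mathbb{Q}$-equality $\im B^T \otimes \mathbb{Q} = (\ker B)^\perp \otimes \mathbb{Q}$ closes the argument rationally.

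The remaining step is to upgrade to the $\mathbb{Z}$-statement, which amounts to saturation of $\im \Phi$ in $\mathbb{Z}^W$; this follows from $\mathbb{Z}\text{-span}\{v_Y\}_Y = N$. I would establish that identity by producing, for each non-colored vertex $v_k$, an explicit minimal cut $Y_k$ with $v_{Y_k} = e_k$: apply the recursion $v_Y = e_g + \sum_{i \in I}(v_{Y_i} - e_g)$ along the unique path from the principal vertex $v_0$ to $v_k$, at each intermediate level letting $I$ be the singleton indexing the subtree containing $v_k$ (cutting every sibling subtree at its root principal branch), and finishing with the ``all principal branches of $v_k$'' cut inside $\Gamma_k$. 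The main obstacle is identifying the clean identities $D_k = \sum_{i \in P} C_i$ and $C_{d_u} = D_p - D_u$; once these are in hand both containments and the saturation verification are immediate.
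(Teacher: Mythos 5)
Your proof is correct. It shares the paper's forward direction --- summing the constraints $\sum_{Y \ni i} m_{D_Y} = 0$ over the edges of a downward path from $v_k$ to get $\langle m, D_k \rangle = 0$ and then citing Proposition \ref{gen} --- but the reverse inclusion and the integral upgrade are genuinely different. The paper argues by a dimension count ($\dim N_{\R} = g$ against $\dim \tilde{N}_{\R} \ge W - g$) to get the rational statement, and then proves integrality by writing $\sum_D n_D D = \frac{1}{s}\sum_i r_i D_i$ and showing $s \mid r_i$ by induction along down-paths using the special divisors $D^k$. You instead realize the relation lattice as $\ker B$ for the explicit edge--cut incidence matrix, check that each column divisor $C_i = \sum_{Y \ni i} D_Y$ is itself Cartier (equal to $D_p = \Phi(s_p)$ or to $D_p - D_u = \Phi(s_p - s_u)$ according to whether the lower endpoint of the edge $i$ is colored), and obtain integrality from saturation of $\im \Phi$, reduced to the fact that the rays $v_Y$ span $N$ over $\Z$ via explicit minimal cuts with $v_{Y_k} = e_k$; I checked the identities $D_k = \sum_{i \in P} C_i$, $C_i = D_p$ or $D_p - D_u$, and the cut construction, and they all hold. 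A side benefit of your route: the paper's bound $\dim \tilde{N}_{\R} \ge W - g$ amounts to the claim that the $\#\Edge(\Gamma)$ constraints span a space of rank at most $g$ (the stated justification, that the constraints are linearly independent, would bound the rank in the wrong direction, and in fact they are dependent already in the example of Figure 1), and that rank bound is exactly what your identities $C_i \in \im \Phi$ supply. The paper's divisibility induction is the more elementary integrality argument; your saturation argument is the more conceptual one and applies verbatim whenever the ray generators span the cocharacter lattice.
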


\begin{proof} The group of 
Cartier boundary divisors of $V(\Gamma)$ forms a sublattice of the
group of Weil boundary divisors $\mathbb{Z}^w$, isomorphic to the
weight lattice $\t_\Z^\dual \cong \Z^g$.  Suppose $(m_D)_{D} \in
{\Z}^w$ satisfies the condition in the Theorem,
$$\sum \limits_{E: e \in E} m_{D_E} = 0, \quad \forall e
\in \Edge_{<\infty}(\Gamma) .$$  
Consider a non-self-crossing path $\gamma$ from a vertex $v_k$ to a
colored vertex.  By summing over edges of $\gamma$, we obtain
$$\sum_{D \in \D_k} m_{D} = \sum \limits_{e \in \gamma} \sum
\limits_{E: e \in E} m_{D_E} = 0.$$ 
Thus, if $\sum \limits_{D} n_D D$ is a Cartier divisor, then $D$ is a
combination of $D_1,\ldots, D_g$ and so $\sum_{D} m_Dn_D = 0$.  On the
other hand, the set of $(m_D)_D \in \mathbb{Z}^w$ that satisfies the
condition in the Theorem forms a sublattice with dimension at least $w
- g$, since the conditions are linearly independent.  Therefore, the
space of $n_D$ satisfying the condition in the Theorem form a lattice
of dimension $g$, which is the same as that of the space of Cartier
boundary divisors.  This shows that the two spaces are the same, up to
torsion.

To show that the lattices are in fact the same, suppose that $(n_D)_D
\in \mathbb{Z}^w$ satisfies the condition in the Theorem.  By the
previous paragraph, $$\sum \limits_{D} n_DD = \frac{1}{s}\sum
\limits_{i = 1}^{g} r_i D_i$$ for some integers $r_i$ and $s$ such
that $s > 0$.  It is suffices now to show that $s | r_i$ for every $1
\leq i \leq g $.  To see this, note that 
$$\sum \limits_{D} n_D D = \frac{1}{s} \left(\sum \limits_{i = 1}^g
(r_i \sum \limits_{D \in \D_i} D) \right) = \frac{1}{s} \left(\sum
\limits_{D}(\sum \limits_{i: D \in \D_i} r_i)D \right).$$ Thus
$$n_D = \frac{1}{s}\sum \limits_{i: D \in
  \D_i} r_i.$$
For the principal vertex $v_g$, define $D^0 = D_E$ where $E = \lbrace
d_1, ..., d_p \rbrace$.  More generally, for any vertex $v_k$, let
$\gamma_k$ be the down-path from $v_g$ to $v_k$, and let $D^k$ to be
the divisor $D^k = D_E$ where $E$ is the set of edges immediately
below the vertices in $\gamma_k$.  Then
\begin{equation}
n_{D^k} = \frac{1}{s} \sum \limits_{v_i \in \gamma_k} r_i \in \mathbb{Z}.
\end{equation} 
Since $n_{D^1} \in \mathbb{Z}$, we obtain $s | r_1$, and similarly for
any vertices adjacent to the semi-infinite edges besides the root
edge.  Induction on the length of the path
$\gamma_k$ gives $s | r_k$.
\end{proof}

\begin{corollary} \label{charcor}   A sum $\sum \limits_{\lbrace I_1, ..., I_r \rbrace \in \Par(\Gamma)}
  n_{I_1, ..., I_r} D_{I_1, ..., I_r}$ is a Cartier divisor of
  $V(\Gamma)$ if and only if $(n_{ I_1, ..., I_r})_{\lbrace I_1, ...,
    I_r \rbrace \in \Par(\Gamma)}$ is in the orthogonal complement of
  the kernel of $t_\Gamma$, where
$$ t_\Gamma: \Z^{\Par(\Gamma)} \to \Z^{\PP(\Gamma)}, \quad
(t_\Gamma(m))(S)= \sum_{ S \in \{I_1,\ldots, I_r \}}
m_{I_1,\ldots,I_r} .$$
\end{corollary} 

\section{Global description of boundary divisors}

In this section we give a criterion for a boundary divisor in the
moduli space of scaled lines $\ol{M}_{n,1}(\bA)$ to be Cartier.  By
the local description of the moduli space in Section \ref{local}, any
divisor of type I is Cartier, so it suffices to consider divisors of
type II.  To describe the answer, let $I = \{1,\ldots,n \}$, $\Par(I)$
the set of {\em non-trivial} partitions of $I$, and $\PP(I)$ the power
set of {\em non-empty} subsets of $I$.  We identify the set of prime
Weil boundary divisors of type I with the subset of elements of
$\PP(I)$ of size at least two, and the prime Weil boundary divisors of
type II with $\Par(I)$.  Thus in particular the space of Weil boundary
divisors of type II becomes identified with $\Z^{\Par(I)}$, by the map
$$ \left\{ \sum_P {l}(P) D_P \right\} \to \Z^{\Par(I)}, \quad \sum_P {l}(P) D_P
\mapsto {l} .$$
Let $Z(I)$ denote the natural incidence relation,
$$Z(I) = \{ (S,P) \in \PP(I) \times \Par(I) \, | \, S \in P \} .$$
We have a natural map from the space of functions on
$\Par(I)$ to functions on $\PP(I)$ given by pullback and push-forward:
\begin{equation}\label{eq:t} t : \Z^{\Par(I)} \to \Z^{\PP(I)}, \quad (t( h))(S) = \sum_{S\in P} h(P) .\end{equation}
A {\em relation} on the group of Cartier boundary divisors is a
collection of coefficients $ \{ m_{ I_1,\ldots, I_r } \} \in
\Z^{\Par(\{1,\ldots, n\})}$ such that
$$ \sum_{I_1,\ldots, I_r}  m_{I_1,\ldots,I_r} {l}_{I_1,\ldots, I_r}  = 0 $$
for every Cartier divisor $D = \sum_{I_1,\ldots,I_r}
{l}_{I_1,\ldots,I_r} D_{I_1,\ldots, I_r}$.  The space of relations
forms a subgroup of $\Z^{\Par(I)}$.

\begin{theorem} \label{main}  {\rm (Relations on Cartier boundary divisors)}  
The group of relations on the group of Cartier boundary divisors of
type II is the kernel of $t$.
\end{theorem} 

\begin{example}  For $n = 2$ there are two boundary divisors, and there is only the zero relation.  For $n=3$ there are eight boundary divisors, and there is only the zero relation.  For $n=4$ there are 
$| \PP( \{1,2,3,4 \})| - 4 = 11 $
boundary divisors of type I, and 
$| \Par( \{1,2,3,4 \})| = 1 + 6 + 3 + 4 = 14$ 
boundary divisors of type II.  A divisor 
$$D = \sum {l}_{I_1,\ldots,I_r} D_{I_1,\ldots, I_r}$$ 
is Cartier only if the three relations (as $i,j,k,l$ vary)
\begin{equation} \label{simplerelation}
 {l}_{ \{ i,j \}, \{ k \}, \{ l \} } + {l}_{ \{ i \}, \{ j \}, \{ k,l \}}
- {l}_{ \{ i,j \}, \{ k,l \}} - {l}_{ \{ i \}, \{ j \}, \{ k \}, \{ l \}} \end{equation} 
hold. Thus the space of Cartier boundary divisors of type II is a
$11$-dimensional subspace of the space of the $14$-dimension space of
Weil boundary divisors of type II.
\end{example}

\begin{definition} {\rm (Compatible subsets and partitions with a tree)} 
\begin{enumerate} 
\item A tree $\Gamma$ is {\em simple} if it has a single vertex.
\item For each partition $\lbrace I_1, ..., I_r \rbrace$ of $I =
  \lbrace 1, ..., n \rbrace$, define the tree $\Gamma_{I_1, ..., I_r}$
  as follows: $\Gamma_{I_1, ..., I_r} $ has $r$ principal subtrees
  which are respectively the simple colored trees $\Gamma_j, j =
  1,\ldots, r$ whose semi-infinite edges labeled by $i \in I_j$.
\item 
For each subset $I \subset \{ 1, \ldots,n \}$, let $\Gamma_I$ denote
the colored tree with a single colored vertex and a single non-colored
vertex with semi-infinite edges labelled by $i \in I$.
\item Given $v, \tilde{v} \in \Ver(\Gamma)$, we write $v E \tilde{v}$
  if there is an edge connecting $v$ and $\tilde{v}$. A {\em tree
    homomorphism} $f: \Ver(\Gamma) \rightarrow \Ver(\Gamma')$ is a map
  that maps the vertices and edges of $\Gamma$ to the vertices and
  edges of $\Gamma'$ respectively and satisfies:
\begin{enumerate}
\item $f$ maps the principal vertex $v_g$ of $\Gamma$ to the principal
  vertex $v'_0$ of $\Gamma'$.
\item If $v, \tilde{v} \in \Ver(\Gamma)$ satisfies $v E \tilde{v}$, then either $f(v) E f(\tilde{v})$ or $f(v) = f(\tilde{v})$.
\item $f$ maps the colored vertices of $\Gamma$ to the colored
  vertices of $\Gamma'$.
\end{enumerate}
\item  A subset $I \subset \{ 1,\ldots n \}$ is {\em compatible} with $\Gamma$ 
if there exists a tree homomorphism $\Gamma \to \Gamma_I$.
\item 
A partition $\lbrace I_1, ..., I_r \rbrace$ of $\lbrace 1, ..., n
\rbrace$ is {\em compatible} with $\Gamma$ if there exists a tree
homomorphism $f: \Gamma \rightarrow \Gamma_{I_1, ..., I_r}$. 
\end{enumerate} 
\end{definition}

See Figure \ref{sometrees} for the trees $\Gamma_{I_1,\ldots,I_r}$ and
$\Gamma_I$.  Denote by $\Par(\Gamma)$ the set of compatible partitions
of $\{ 1 ,\ldots, n \}$, and by $\PP(\Gamma)$ the set of compatible
subsets of $\{ 1, \ldots, n \}$.

\begin{figure}
\begin{picture}(0,0)%
\includegraphics{sometrees.pstex}%
\end{picture}%
\setlength{\unitlength}{3947sp}%
\begingroup\makeatletter\ifx\SetFigFont\undefined%
\gdef\SetFigFont#1#2#3#4#5{%
  \reset@font\fontsize{#1}{#2pt}%
  \fontfamily{#3}\fontseries{#4}\fontshape{#5}%
  \selectfont}%
\fi\endgroup%
\begin{picture}(5345,2481)(-11,-2240)
\put(177,-35){\makebox(0,0)[lb]{{{{$I_1$}%
}}}}
\put(1145,-19){\makebox(0,0)[lb]{{{{$I_2$}%
}}}}
\put(2087,-57){\makebox(0,0)[lb]{{{{$I_3$}%
}}}}
\put(4700,130){\makebox(0,0)[lb]{{{{$I$}%
}}}}
\put(3884,-2122){\makebox(0,0)[lb]{{{{$\Gamma_I$ }%
}}}}
\put(569,-2186){\makebox(0,0)[lb]{{{{$\Gamma_{I_1,\ldots,I_r}$}%
}}}}
\end{picture}%
\caption{The trees $\Gamma_I$ and $\Gamma_{I_1,\ldots,I_r}$}
\label{sometrees}
\end{figure}

\begin{proposition} \label{bij2}
There is a canonical bijection between the set of minimally complete
subsets of $\Edge_{< \infty}(\Gamma)$ and the set of compatible
partitions $\Par(\Gamma)$.
\end{proposition}

\begin{example} For the tree $\Gamma$ in Figure \ref{fourex}, the correspondence between 
the minimally complete subsets of $\Edge_{< \infty}(\Gamma)$ and the compatible 
partitions of $\{1, ..., n \}$ is 
$$\lbrace e_1,e_2 \rbrace \longleftrightarrow \lbrace 1, 2 \rbrace, \lbrace 3, 4 \rbrace, \quad \lbrace e_1,e_5,e_6 \rbrace \longleftrightarrow \lbrace 1, 2 \rbrace, \lbrace 3 \rbrace, \lbrace 4 \rbrace$$
$$\lbrace e_2,e_3,e_4 \rbrace \longleftrightarrow \lbrace 1 \rbrace, \lbrace 2 \rbrace, \lbrace 3, 4 \rbrace, \quad \lbrace e_3, e_4, e_5, e_6 \rbrace \longleftrightarrow \lbrace 1 \rbrace, \lbrace 2 \rbrace, \lbrace 3 \rbrace, \lbrace 4 \rbrace.$$ 
\end{example}

\begin{proof}
Given a minimally complete subset $E$, we obtain a partition by
removing the edges in $E$ and considering the partition of the
semi-infinite edges induced by the decomposition into connected
components; that is, two semi-infinite edges are in the same set in
the partition if they can be connected by a path in the complement of
$E$.
There is a morphism
of trees $\Gamma \to \Gamma_{I_1,\ldots, I_r}$ given by collapsing
each connected component of $\Gamma - E$ to a point, which shows that
the partition is compatible.  Conversely, given a compatible
partition, consider the corresponding morphism of trees $\Gamma \to
\Gamma_{I_1,\ldots,I_r}$ and let $E$ denote the subset of edges of
$\Gamma$ that are not collapsed under the morphism.  Since the finite
edges of $\Gamma_{I_1,\ldots, I_r}$ form a minimally complete subset
of $\Edge_{< \infty}(\Gamma_{I_1,\ldots,I_r})$, the set $E$ is also
minimally complete.  The reader may check that these two maps of sets
are inverses.
\end{proof} 

From Proposition \ref{bij2} we obtain a bijective correspondence
between compatible partitions and the prime invariant Weil divisors of
$V(\Gamma)$. For each compatible partition $\{ I_1, ..., I_r \} \in
\Par(\Gamma)$, denote by $D_{I_1, ..., I_r}$ the corresponding
invariant prime Weil divisor of $V(\Gamma)$.

\begin{lemma}  {\rm (Four-term relation)} 
 \label{atleast} Suppose that $ \{ I_1,\ldots, I_r \}$ is 
a partition with at least two elements of size at least two.  Then there exists
a colored tree $\Gamma$ so that $\{ I_1,\ldots, I_r \} \in
\Par(\Gamma)$, and a relation $m \in \ker t_\Gamma$ so that
\begin{enumerate}
\item $m( \{ I_1,\ldots, I_r \} ) = 1$. 
\item For any partition $\{ J_1,\ldots, J_{r'}) \in \Par(\Gamma)$
  distinct from $\{ I_1,\ldots, I_r \}$, we have $m( \{ J_1,\ldots,
  J_{r'} \}) = 0$ unless $r' > r$.
\end{enumerate} 
\end{lemma} 

\begin{proof}   Without loss of generality suppose that $|I_1|, |I_2|$ are both 
at least two, and so admit partitions $I_1 = I_1' \cup I_1''$, $I_2 =
I_2' \cup I_2''$.  Let $\Gamma$ be the tree with $r + 2$ colored
vertices, as in Figure \ref{fourpar}.  Then the
sum of delta functions
$$ \delta_{ I_1',I_1'',I_2',I_2'',I_3,\ldots, I_r } - \delta_{
  I_1,I_2',I_2'',I_3,\ldots, I_r } - \delta_{
  I_1',I_1'',I_2,I_3,\ldots, I_r } + \delta_{ I_1,I_2,I_3,\ldots,
  I_r } \in \ker t_\Gamma $$
is a relation since each subset in each partition occurs an equal
number of times with opposite signs.
\end{proof} 

\begin{figure}
\begin{picture}(0,0)%
\includegraphics{fourpar.pstex}%
\end{picture}%
\setlength{\unitlength}{3947sp}%
\begingroup\makeatletter\ifx\SetFigFont\undefined%
\gdef\SetFigFont#1#2#3#4#5{%
  \reset@font\fontsize{#1}{#2pt}%
  \fontfamily{#3}\fontseries{#4}\fontshape{#5}%
  \selectfont}%
\fi\endgroup%
\begin{picture}(2512,2242)(760,-2591)
\put(2424,-2090){{{{$I_3$}%
}}}
\put(3093,-2062){{{{$I_r$}%
}}}
\put(1909,-2554){{{{$I_2''$}%
}}}
\put(1473,-2554){{{{$I_2'$}%
}}}
\put(1132,-2546){{{{$I_1''$}%
}}}
\put(775,-2552){{{{$I_1'$}%
}}}
\end{picture}%
\caption{The tree $\Gamma$ compatible with the four-term relation} 
\label{fourpar}
\end{figure} 

\begin{lemma} \label{support} Let $m \in \Ker(t)$ be a relation and $r\in\{1,\ldots,n-1\}$. Assume that $m$ vanishes on every partition of length less than $r$. Then $m$ vanishes on every partition that consists of $r-1$ singletons and a set of size $n-r+1$. If $r=n-1$ then $m$ is constantly equal to zero.
\end{lemma} 

\begin{proof} To prove the first assertion, let $S$ be a set of size $n-r+1$. We denote by $P$ the unique partition consisting of $S$ and $r-1$ singletons. Every partition containing $S$, other than $P$, has length less than $r$. Hence by assumption, $m$ vanishes on such a partition. Using the hypothesis $tm=0$ and the definition (\ref{eq:t}) of $t$, it follows that $m(P)=0$. The first assertion follows.

To prove the second assertion, consider the case $r=n-1$. Every partition of length $n-1$ consists of $n-2$ singletons and one set of size two. By the first assertion, $m$ vanishes on every such partition. Since by hypothesis $m$ vanishes on every partition of length less than $n-1$, it follows that $m$ vanishes on all partitions, except possibly on $\big\{\{1\},\ldots,\{n\}\big\}$. However, since $tm=0$, equality (\ref{eq:t}) with $S=\{1\}$ implies that $m$ vanishes on this partition, as well. This proves the second assertion.
\end{proof} 

\begin{proof}[Proof of Theorem \ref{main}]
A Weil divisor is Cartier if and only if its restriction to every
affine Zariski open subset is Cartier.  Hence it suffices to check if
a divisor
$$D = \sum
{l}_{I_1,\ldots, I_r} D_{I_1,\ldots,I_r}$$ 
is Cartier in every chart in Proposition \ref{toricprop}.  Note that each
$D_{I_1,\ldots,I_r}$ corresponds to a non-empty Weil boundary divisor
in $V(\Gamma)$ iff $\{ I_1,\ldots, I_r \} \in \Par(\Gamma)$.  A
criterion for a Weil boundary divisor in $V(\Gamma)$ to be Cartier is
given above in Lemma \ref{charcor}. There is a natural embedding
$\pi_\Gamma$ of $\Ker t_\Gamma$, as in \ref{charcor}, in $\Ker t$
which preserves $m_{I_1, ..., I_r}$ if $\lbrace I_1, ..., I_r\rbrace
\in \Par(\Gamma)$ and maps the other $m_{I_1, ..., I_r}$, where
\ccomment{fixed spacing} 
$\lbrace I_1, ..., I_r \rbrace \notin \Par(\Gamma)$, to $0$.  The
image of $\pi_\Gamma$ is a subspace of $\Ker t$ and $D$ is a
Cartier divisor of $V(\Gamma)$ if and only if $({l}_{I_1, ...., I_r})
\in \coker \pi_\Gamma$.  Hence, $D$ is a Cartier divisor of all
$V(\Gamma)$ if and only if $({l}_{I_1, ..., I_r})$ is in the orthogonal
complement of the image $\pi_\Gamma$ in $\Z^{\Par(I)}$ for all
$\Gamma$.  Thus, it suffices to show that
\begin{equation}\label{eq:Ker t}\Ker t \subset \on{hull}_{\Z}
\on{image} \pi_\Gamma.\end{equation}
For this let $m \in \Ker t$ be a relation. Assume that $m$ is nonzero on some partition of length $\leq n-2$, and let $\{ I_1,\ldots, I_r \}$ a partition of minimal length, on which $m$ is nonzero. It follows from Lemma \ref{support} that $\{ I_1,\ldots, I_r \}$ contains at least two sets of size at least two. Hence by Lemma \ref{atleast} there exists a colored tree $\Gamma$ such that $\{ I_1,\ldots, I_r \}\in\Par(\Ga)$, and a relation $m'\in\ker t_\Gamma$ that attains the value 1 on $\{ I_1,\ldots, I_r \}$ and vanishes on all other partitions of length at most $r$. The relation $m-m_{I_1,\ldots, I_r}m'$ is non-zero on fewer partitions of length $r$ than $m$.  Continuing in this way we obtain a relation which vanishes on all partitions of length less than $n-1$. By the second statement in Lemma \ref{support}, any such relation must be zero. It follows that $m$ is a linear combination of elements of $\ker t_\Gamma$, where $\Gamma$ ranges over all colored trees. This proves the inclusion (\ref{eq:Ker t}) and completes the proof of Theorem \ref{main}.
\end{proof}

\def\cprime{$'$} \def\cprime{$'$} \def\cprime{$'$} \def\cprime{$'$}
  \def\cprime{$'$} \def\cprime{$'$}
  \def\polhk#1{\setbox0=\hbox{#1}{\ooalign{\hidewidth
  \lower1.5ex\hbox{`}\hidewidth\crcr\unhbox0}}} \def\cprime{$'$}
  \def\cprime{$'$}

\end{document}